\providecommand{\U}[1]{\protect\rule{.1in}{.1in}}
\providecommand{\U}[1]{\protect\rule{.1in}{.1in}}
\providecommand{\U}[1]{\protect\rule{.1in}{.1in}}
\providecommand{\U}[1]{\protect\rule{.1in}{.1in}}
\providecommand{\U}[1]{\protect\rule{.1in}{.1in}}
\newcommand{\G}{\mathbb{G}}
\newcommand{\C}{\mathbb C}
\newcommand{\p}{\mathbb P}
\DeclareMathOperator{\Sing}{Sing}
\DeclareMathOperator{\Sec}{Sec}
\newcommand{\QED}{\ifhmode\unskip\nobreak\fi\quad {\rm Q.E.D.}} 
\renewcommand{\sec}{\mathbb{S}ec}
\DeclareMathOperator{\expdim}{expdim}
\newtheorem{thm}{Theorem}[section]
\newtheorem{Lemma}[thm]{Lemma}
\newtheorem{Proposition}[thm]{Proposition}
\newtheorem{Corollary}[thm]{Corollary}
\newtheorem*{thm*}{Theorem}
\theoremstyle{definition}
\newtheorem{Definition}[thm]{Definition}
\newtheorem{Remark}[thm]{Remark}
\begin{document}
\title{On tangential weak defectiveness and identifiability of projective varieties}

\author[Ageu Barbosa Freire]{Ageu Barbosa Freire}
\address{\sc Ageu Barbosa Freire\\
Instituto de Matem\'atica e Estat\'istica, Universidade Federal Fluminense, Campus Gragoat\'a, Rua Alexandre Moura 8 - S\~ao Domingos\\
24210-200 Niter\'oi, Rio de Janeiro\\ Brazil}
\email{ageufreire@id.uff.br}

\author[Alex Casarotti]{Alex Casarotti}
\address{\sc Alex Casarotti\\ Dipartimento di Matematica e Informatica, Universit\`a di Ferrara, Via Machiavelli 30, 44121 Ferrara, Italy}
\email{csrlxa@unife.it}

\author[Alex Massarenti]{Alex Massarenti}
\address{\sc Alex Massarenti\\ Dipartimento di Matematica e Informatica, Universit\`a di Ferrara, Via Machiavelli 30, 44121 Ferrara, Italy}
\email{alex.massarenti@unife.it}

\date{\today}
\subjclass[2020]{Primary 14N07; Secondary 14N05, 14N15, 14M15, 15A69, 15A75}
\keywords{Secant varieties, secant defectiveness, weak defectiveness, tangential weak defectiveness, identifiability}

\begin{abstract}
A point $p\in\mathbb{P}^N$ of a projective space is $h$-identifiable, with respect to a variety $X\subset\mathbb{P}^N$, if it can be written as linear combination
of $h$ elements of $X$ in a unique way. Identifiability is implied by conditions on the contact locus in $X$ of general linear spaces called non weak defectiveness and non tangential weak defectiveness. We give conditions ensuring non tangential weak defectiveness of an irreducible and non-degenerated projective variety $X\subset\mathbb{P}^N$, and we apply these results to Segre-Veronese varieties.   
\end{abstract}

\maketitle
\setcounter{tocdepth}{1}
\tableofcontents

\section{Introduction}
A point $p\in\mathbb{P}^N$ of a projective space is \textit{$h$-identifiable} with respect to a variety $X\subset\mathbb{P}^N$ if it can be written as linear combination
of $h$ elements of $X$ in a unique way.

Identifiability problems and techniques are of relevance in both pure and applied mathematics. For instance, identifiability algorithms have applications in psycho-metrics, chemometrics, signal processing, numerical linear algebra, computer vision, numerical analysis, neuroscience and graph analysis \cite{BK09}, \cite{CM96}, \cite{CGLM08}. In pure mathematics identifiability questions often appears in rationality problems \cite{MM13}, \cite{Ma16}.
 
Identifiability has been related to the concept of weak defectiveness in \cite{Me06}, and more recently to the notion of tangential weak defectiveness in \cite{CO12}. 
 
We introduce the concept of \textit{$(h,s)$-tangential weakly defectiveness}, where $h,s$ are positive integers. A variety $X\subset\mathbb{P}^N$ is $(h,s)$-tangentially weakly defective if a general linear subspace of dimension $s$, which is tangent to $X$ at $h$ general points $x_1,\dots,x_h\in X$, is tangent to $X$ along a positive dimensional subvariety of $X$ containing at least one of the $x_i$. In particular, when $s = \dim\left\langle T_{x_1}X,\dots, T_{x_h}X\right\rangle$ we recover the notion of \textit{$h$-tangential weak defectiveness} while for $s = N-1$ we get the notion of \textit{$h$-weak defectiveness}. 

The \textit{$h$-secant variety} $\mathbb{S}ec_{h}(X)$ of a non-degenerate $n$-dimensional variety $X\subset\mathbb{P}^N$ is the Zariski closure of the union of all linear spaces spanned by collections of $h$ points of $X$. The \textit{expected dimension} of $\mathbb{S}ec_{h}(X)$ is $\expdim(\mathbb{S}ec_{h}(X)):= \min\{nh+h-1,N\}$. The actual dimension of $\mathbb{S}ec_{h}(X)$ may be smaller than the expected one. Following \cite[Section 2]{CC10}, we say that $X$ is \textit{$h$-defective} if $\dim(\mathbb{S}ec_{h}(X)) < \expdim(\mathbb{S}ec_{h}(X))$.

Note that if $X\subset\mathbb{P}^N$ is $(h,s)$-tangentially weakly defective then it is $(h,s')$-tangentially weakly defective for any $s'\geq s$. Furthermore, if $X\subset\mathbb{P}^N$ is $h$-defective then it is $(h,s)$-tangentially weakly defective for all $s\geq \dim\left\langle T_{x_1}X,\dots, T_{x_h}X\right\rangle$. Moreover, if $X\subset\mathbb{P}^N$ is not $h$-tangentially weakly defective then it is $h$-identifiable. In Section \ref{sec1} we recall all these notions and the relations among them in detail. 

In Section \ref{or-wd}, mixing the notion of osculating regularity introduced in \cite{MR19} with that of weak defectiveness, we prove a general result for producing bounds yielding the non $(h,s)$-tangential weak defectiveness of a projective variety $X\subset\mathbb{P}^N$. Thanks to this machinery in Section \ref{wd-SV} we prove a number of results on weak defectiveness of Segre-Veronese varieties. Given two $r$-uples $\textbf{\textit{n}}=(n_1,\dots,n_r)$ and $\textbf{\textit{d}} = (d_1,\dots,d_r)$ of positive integers, with $n_1\leq \dots \leq n_r$ we will denote by $SV^{\textbf{\textit{n}}}_{\textbf{\textit{d}}}\subset\mathbb{P}^N$ the corresponding Segre-Veronese variety that is the product  $\mathbb{P}^{n_1}\times\dots\times\mathbb{P}^{n_r}$ embedded by the complete 
linear system  $\big|\mathcal{O}_{\mathbb{P}^{n_1}\times\dots\times\mathbb{P}^{n_r}}(d_1,\dots, d_r)\big|$. Our main results in Propositions \ref{Prop1}, \ref{Prop2}, \ref{Prop0}, \ref{Prop1bis}, \ref{Prop3bis}, \ref{Prop4bis}, Theorems \ref{Bound_non_Wd}, \ref{th1wd} and Remark \ref{asy_wd} can be summarized as follows.

\begin{thm}\label{main1}
If $h\leq (n_1+1)^{\lfloor\log_2(d)\rfloor}$ then the Segre-Veronese variety $SV^{\textbf{\textit{n}}}_{\textbf{\textit{d}}}\subset\mathbb{P}^N$ is not $h$-weakly defective, where $d = \min\{d_1,\ldots,d_r\}$. In particular, under this bound $SV^{\textbf{\textit{n}}}_{\textbf{\textit{d}}}\subset\mathbb{P}^N$ is not $h$-defective. Furthermore, $SV^{\textbf{\textit{n}}}_{\textbf{\textit{d}}}$ is $1$-weakly defective if and only if $d_r = 1$ and $n_r > \sum_{i=1}^{r-1}n_i$. 

Moreover, consider $SV_{\textbf{\textit{d}}}^{\textbf{\textit{n}}}$ with $\textbf{\textit{n}}=(n_{1},\dots,n_{r})$ and $\textbf{\textit{d}}=(d_{1},\dots,d_{r-1},1)$, and assume that $n_{r}> \sum_{i=1}^{r-1} n_{i}$. If 
$$s\leq \prod_{i=2}^r{\binom{n_i+d_i}{n_i}}-n_{r}\sum_{i=1}^{r-1}n_{i}$$ 
then $SV_{\textbf{\textit{d}}}^{\textbf{\textit{n}}}$ is not $(1,s)$-tangentially weakly defective.

Finally, if $\textbf{\textit{n}}=(1,n)$ and $\textbf{\textit{d}}=(1,d)$ then $SV_{\textbf{\textit{d}}}^{\textbf{\textit{n}}}$ is not $(1,s)$-tangentially weakly defective if and only if $s\leq d(n+1)$.
\end{thm}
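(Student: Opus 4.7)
The plan is to use the $(n{-}1)$-parameter family of rational normal curves $C_L := \{\ell_0\}\times L \subset X$ through a general point $x_0 = (\ell_0, y_0)$, indexed by lines $L\subset\mathbb{P}^n$ through $y_0$, and to apply Theorem \ref{Bound_non_Wd} for the direction $s \leq d(n+1)\Rightarrow$ non-twd, producing an explicit tangent $\Lambda$ for the converse. I realize $X\subset\mathbb{P}(V_1\otimes S^d V_2)$ via $(\ell, y)\mapsto \ell\otimes y^d$, so that the affine tangent is $\hat T_{(\ell_0,y)}X = V_1\otimes y^d + \ell_0\otimes y^{d-1}V_2$. Writing $V_L\subset V_2$ for the 2-dimensional subspace cut out by $L$, varying $y$ over $L$ gives
\[
\sum_{p\in C_L}\hat T_p X \;=\; V_1\otimes S^d V_L \;+\; \ell_0\otimes\bigl(S^{d-1}V_L\cdot V_2\bigr),
\]
with intersection $\ell_0\otimes S^d V_L$; the count $2(d+1)+(dn+1)-(d+1) = d(n+1)+2$ shows this span has projective dimension exactly $d(n+1)+1$.

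For the direction $s>d(n+1) \Rightarrow (1,s)$-twd, the subspace $\Lambda_L := \sum_{p\in C_L} T_p X$ built above has projective dimension $d(n+1)+1$, contains $T_{x_0} X$, and is tangent to $X$ along the positive-dimensional curve $C_L\ni x_0$ by construction; any $\Lambda$ of dimension $s$ containing $\Lambda_L$ inherits the tangency, witnessing $(1,s)$-tangential weak defectiveness.

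For the direction $s\leq d(n+1)\Rightarrow$ non-$(1,s)$-twd, I would feed the family $\{C_L\}_L$ into Theorem \ref{Bound_non_Wd}: each $C_L$ is embedded as a rational normal curve of degree $d$, whose osculating flag at $x_0$ has ranks $1,2,\ldots,d+1$ as needed by the regularity hypothesis of the criterion, and the numerical bound the theorem produces is precisely governed by the span dimension computed above. The main obstacle will be verifying the hypothesis of Theorem \ref{Bound_non_Wd} with this particular family: one must match the abstract osculating-regularity input to the concrete data of the $C_L$, showing in particular that the mixed variation along the $\mathbb{P}^1$-factor, encoded by the summand $V_1\otimes S^d V_L$ (as opposed to the pure-$\mathbb{P}^n$ variation $\ell_0\otimes(S^{d-1}V_L\cdot V_2)$), is reflected correctly, so that the criterion delivers the sharp value $d(n+1)$ rather than a strictly weaker bound.
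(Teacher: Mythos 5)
Your proposal addresses only the last clause of Theorem \ref{main1}. The statement aggregates four separate results: the bound $h\leq(n_1+1)^{\lfloor\log_2(d)\rfloor}$ for non weak defectiveness (Propositions \ref{Prop1} and \ref{Prop2} combined with osculating regularity via Theorem \ref{Non_Weakly_defec}, Theorem \ref{Bound_non_Wd} and Remark \ref{asy_wd}), the classification of $1$-weakly defective Segre-Veronese varieties (Propositions \ref{Prop0} and \ref{Prop1bis}), the $(1,s)$-bound when $d_r=1$ (Proposition \ref{Prop4bis}), and the $\mathbb{P}^1\times\mathbb{P}^n$ case (Proposition \ref{Prop3bis}). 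You say nothing about the first three, so even if your argument for the last clause were complete it would establish only a fraction of the theorem.

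For the part you do treat there are two substantive problems. First, you realize $\mathbb{P}^1\times\mathbb{P}^n$ by $(\ell,y)\mapsto\ell\otimes y^d$, i.e.\ by $\mathcal{O}(1,d)$, inside $\mathbb{P}(V_1\otimes S^dV_2)$. The relevant result, Proposition \ref{Prop3bis}, concerns $\mathcal{O}(d,1)$ (the ambient there is $\mathbb{P}^{dn+d+n}$ and the monomials are $X_0^{d-i}X_1^iY_j$), and this is forced by consistency with Theorem \ref{th1wd}: for $\mathcal{O}(1,d)$ with $d\geq 2$ one has $d_r=d\geq 2$, so that variety is not even $1$-weakly defective, hence not $(1,s)$-tangentially weakly defective for any $s$, and the ``only if'' direction is false for the variety you set up. Second, and independently of the embedding, your ``only if'' argument is logically invalid: exhibiting one special subspace $\Lambda_L$ of dimension $d(n+1)+1$ tangent along the curve $C_L$ does not show that the \emph{general} $\Pi$ of that dimension containing $T_{x_0}X$ has positive dimensional contact locus; semicontinuity (Lemma \ref{lemma_sc}) passes information from a special member to the general one only as an upper bound on the singular locus. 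Finally, for the ``if'' direction you propose to feed the family $\{C_L\}$ into Theorem \ref{Bound_non_Wd}, but that theorem is a criterion for non $h$-weak defectiveness (tangent hyperplanes, $s=N-1$) under osculating-regularity hypotheses and does not produce bounds on $s$ for fixed $h=1$. What the paper actually does is write $\Pi$ as an intersection of $dn+d+n-s$ tangent hyperplanes, degenerate them to the reducible sections $\sum_j\alpha^k_{1,j}X_1Y_j=0$, compute the singular locus explicitly as a linear space of dimension $s-d(n+1)$ inside $\{X_1=0\}$, and conclude by Lemma \ref{lemma_sc}; you never construct such a special $\Pi$, and you yourself flag this as the unresolved obstacle.
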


In Section \ref{wdfib} we give a criterion for non tangential weak defectiveness of products, and we apply it to Segre-Veronese varieties. Our main result is the following:

\begin{thm}\label{main2}
Consider a Segre-Veronese variety $SV_{\textbf{\textit{d}}}^{\textbf{\textit{n}}}\subset\p^{N(\textbf{\textit{n}},\textbf{\textit{d}})}$ with $\textbf{\textit{n}}=(1,n_2,\dots,n_r)$ and $\textbf{\textit{d}}=(1,d_2,\dots,d_r)$.
Assume that $n_2 \leq n_3 \leq \dots \leq n_r$ and let $d:=\min\{d_i\}-1$. If $$h < h_{n_2+1}(d) \sim n_2^{\lfloor log_2(d) \rfloor}$$  
then $SV_{\textbf{\textit{d}}}^{\textbf{\textit{n}}}$ is not $h$-tangentially weakly defective, and hence $SV_{\textbf{\textit{d}}}^{\textbf{\textit{n}}}$ is $h$-identifiable. In particular, under this bound $SV_{\textbf{\textit{d}}}^{\textbf{\textit{n}}}$ is not $h$-defective. 
\end{thm}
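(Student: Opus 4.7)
The strategy is to exploit the honest product structure of $SV_{\textbf{\textit{d}}}^{\textbf{\textit{n}}}$ coming from $n_1=1$ and $d_1=1$, which makes the first factor a linearly embedded $\mathbb{P}^1$. This is precisely the setting where the criterion for non tangential weak defectiveness of products developed in Section~\ref{wdfib} can be combined with the non $h$-weak defectiveness bounds for Segre-Veronese varieties given by Theorem~\ref{main1}, which in turn rests on the osculating regularity machinery of Section~\ref{or-wd}.

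First I would write $X=SV_{\textbf{\textit{d}}}^{\textbf{\textit{n}}}$ as $\mathbb{P}^1\times Y$, where $Y=\mathbb{P}^{n_2}\times\dots\times\mathbb{P}^{n_r}$ is embedded in multidegree $(d_2,\dots,d_r)$. For $h$ general points $p_i=(a_i,y_i)\in X$ the tangent spaces decompose according to the product, and the span $\Pi=\langle T_{p_1}X,\dots,T_{p_h}X\rangle$ gives a tangential contact locus on $X$ that splits into a contribution along the $\mathbb{P}^1$-direction, which is automatic since the first factor is linearly embedded, and a horizontal contribution on $Y$. The horizontal part is controlled by a suitable linear subspace tangent to a related Segre-Veronese on $Y$ at the points $y_1,\dots,y_h$.

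Next I would apply the product criterion of Section~\ref{wdfib} to turn the tangential weak defectiveness question for $X$ into a (non-tangential) $h$-weak defectiveness question for an auxiliary Segre-Veronese on $\mathbb{P}^{n_2}\times\dots\times\mathbb{P}^{n_r}$ whose minimal degree is $d+1=\min\{d_i\}$; the degree shift by one, reflected in the definition $d:=\min\{d_i\}-1$, comes from the fact that partial derivatives lower by one the degree of the corresponding Veronese factor in the osculating/contact analysis. Applying the first part of Theorem~\ref{main1} to this reduced Segre-Veronese then produces the bound
\[
h<(n_2+1)^{\lfloor\log_2 d\rfloor}=h_{n_2+1}(d)\sim n_2^{\lfloor\log_2 d\rfloor}.
\]
Finally, non $h$-tangential weak defectiveness of $X$ implies $h$-identifiability and non $h$-defectiveness by the implications recalled in Section~\ref{sec1}.

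The main obstacle I expect is Step two: matching the contact locus of $\Pi$ on $X=\mathbb{P}^1\times Y$ with the ordinary contact locus of a hyperplane tangent at $h$ points of the reduced Segre-Veronese, and showing that a positive dimensional tangential contact locus on $X$ containing one of the $p_i$ would force a positive dimensional weak contact locus on the reduced variety through the corresponding $y_i$. Once this dictionary is established, the appeal to Theorem~\ref{main1} is immediate.
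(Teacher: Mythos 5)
Your overall frame --- exploiting the linearly embedded $\mathbb{P}^1$ factor and the product criterion of Section \ref{wdfib} --- is the right one, but the step you yourself flag as the main obstacle is exactly where the proof has to happen, and the way you propose to fill it does not work. The paper does not reduce Theorem \ref{main2} to non $h$-weak defectiveness of the reduced Segre--Veronese on $\mathbb{P}^{n_2}\times\dots\times\mathbb{P}^{n_r}$, and Theorem \ref{main1} is never invoked. Instead, Theorem \ref{gen-twd} asks for hypotheses of a different nature: that $T_w^dW\cap W$ is a zero-dimensional scheme supported at $w$, and that for a general hyperplane $H$ of $\langle W\times\{q\}\rangle$ containing $T_w^d(W\times\{q\})$ the intersection $\langle H, T_{\pi(w)}^{(d-1)/2}(W\times\{p\})\rangle\cap(W\times\{p\})$ is again finite and supported at the projection of $w$. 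These are conditions on the intersection of explicit linear spaces with the variety, not on the singular locus of a general tangent hyperplane section, and Corollary \ref{CorSV} verifies them by a direct monomial computation on a Veronese factor realizing $\min\{d_i\}$. In particular the shift $d=\min\{d_i\}-1$ arises because $T_w^{d_j-1}V^{n_j}_{d_j}\cap V^{n_j}_{d_j}$ is supported at $w$ while $T_w^{d_j}$ is the whole ambient space; it does not come from applying Theorem \ref{main1} to the factor, which would give a bound governed by $\lfloor\log_2(d+1)\rfloor$. Note also that your identity $(n_2+1)^{\lfloor\log_2 d\rfloor}=h_{n_2+1}(d)$ is false in general, since $h_m$ is defined through the binary expansion of $d+1$.

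Concretely, two things break in the dictionary you postulate. First, the hyperplane $H_0\subset\langle W\times\{q\}\rangle$ produced by degenerating the $h$ points onto a fiber is a flat limit containing $T_0\cap H_Y$, where $T_0$ is the limit of the full spans $T_t$; it therefore carries higher-order data in the directions of approach and is not a general hyperplane containing $\langle T_{y_1}Y,\dots,T_{y_h}Y\rangle$, so non $h$-weak defectiveness of $Y$ (a statement about general such hyperplanes) cannot be applied to it. This is why the actual proof degenerates further, via the osculating regularity of $Y$, into a single osculating space $T_{y_0}^dY$ and then uses the finiteness hypotheses above together with the semicontinuity Lemma \ref{lemma_sc}. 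Second, a positive-dimensional contact locus of $\langle T_{p_1}X,\dots,T_{p_h}X\rangle$ on $X$ need not project into a contact locus on $Y$ at all: contact points lying off the two distinguished fibers must be excluded by the separate fiber argument of Proposition \ref{tang}, which shows that tangency at such a point forces tangency along a whole line $\mathbb{P}^1_{z_i}$ and hence a contradiction at $z_i$; this has no counterpart in your sketch. Until these two points are supplied, the appeal to Theorem \ref{main1} does not establish the statement.
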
 

We would like to stress that, as noticed in Remark \ref{sd-nn}, the non secant defectiveness of $SV_{\textbf{\textit{d}}}^{\textbf{\textit{n}}}$ is not needed in the proof of Theorem \ref{main2}. For results and conjectures on the secant dimensions of Segre-Veronese varieties we refer to \cite{AB12}, \cite{AB13}, \cite{AB09}, \cite{LP13} and \cite{AMR17}. Finally, we would like to mention that results on the identifiablity of $SV_{\textbf{\textit{d}}}^{\textbf{\textit{n}}}$, under hypotheses on its non secant defectiveness, have been recently given in \cite{BBC18}.   

\subsection*{Acknowledgments}
The first named author would like to thank FAPERJ and Massimiliano Mella (PRIN $2015$, Geometry of Algebraic Varieties, 2015EYPTSB-005) for the financial support, and the University of Ferrara for the hospitality during the period in which the majority of this work was completed. We thank Luca Chiantini for pointing out a mistake in a previous version of Theorem \ref{gen-twd} and Corollary \ref{CorSV}.

The third named author is a member of the Gruppo Nazionale per le Strutture Algebriche, Geometriche e le loro Applicazioni of the Istituto Nazionale di Alta Matematica F. Severi (GNSAGA-INDAM). We thank the referee for the helpful comments that helped us to improve the paper.

\section{Secant defectiveness, $(h,s)$-tangential weak defectiveness and identifiability}\label{sec1}
Throughout the paper we work over the field of complex numbers. In this section we recall the notions of secant variety, secant defectiveness and identifiability. We refer to \cite{Ru03} for a nice and comprehensive survey on the subject.

Let $X\subset\mathbb{P}^N$ be an irreducible non-degenerate variety of dimension $n$ and let $\Gamma_h(X)\subset X\times \dots \times X\times\G(h-1,N)$, where $h\leq N$, be the closure of the graph of the rational map $\alpha: X\times\dots\times X \dasharrow \G(h-1,N)$ taking $h$ general points to their linear span $\langle x_1, \dots , x_{h}\rangle$. Observe that $\Gamma_h(X)$ is irreducible and reduced of dimension $hn$. Let $\pi_2:\Gamma_h(X)\to\G(h-1,N)$ be the natural projection, and $\mathcal{S}_h(X):=\pi_2(\Gamma_h(X))\subset\G(h-1,N)$. Again $\mathcal{S}_h(X)$ is irreducible and reduced of dimension $\min\{hn,h(N-h+1)\}$. Finally, let
$$\mathcal{I}_h=\{(x,\Lambda) \: | \: x\in \Lambda\}\subset\mathbb{P}^N\times\G(h-1,N)$$
with natural projections $\pi_h$ and $\psi_h$ onto the factors. The \textit{abstract $h$-secant variety} is the irreducible variety
$$\Sec_{h}(X):=(\psi_h)^{-1}(\mathcal{S}_h(X))\subset \mathcal{I}_h$$
The \textit{$h$-secant variety} is defined as
$$\sec_{h}(X):=\pi_h(Sec_{h}(X))\subset\mathbb{P}^N$$
It immediately follows that $\Sec_{h}(X)$ is an $(hn+h-1)$-dimensional variety with a $\mathbb{P}^{h-1}$-bundle structure over $\mathcal{S}_h(X)$. We say that $X$ is \textit{$h$-defective} if $\dim\sec_{h}(X)<\min\{\dim\Sec_{h}(X),N\}$.

Now, let $X^{(h)}$ be the symmetric product of $h$-copies of $X$, and consider the locus $S^X_h\subset X^{(h)}$ parametrizing sets of distinct points. Given a point $y\in S^X_h$, corresponding to $h$ distinct points $x_1,\dots,x_h\in X$, we will denote by $\left\langle y\right\rangle$ the linear span $\left\langle x_1,\dots,x_h\right\rangle\subset\mathbb{P}^N$.

\begin{Definition}
A point $p\in\mathbb{P}^N$ has rank $h$ with respect to $X$ if $p\in \left\langle y\right\rangle$ for some $y\in S^X_h$ but $p\notin \left\langle y\right\rangle$ for all $y\in S^X_{k}$ for any $k < h$.

A point $p\in\mathbb{P}^N$ is $h$-identifiable with respect to $X$ if $p$ has rank $h$ with respect to $X$ and $(\pi_h)^{-1}(p)$ is a single point. The variety $X$ is $h$-identifiable if the general point of $\sec_h(X)$ is $h$-identifiable. 
\end{Definition}   

Note that by Terracini's lemma \cite{Te11} if $y\in\sec_h(X)$ is a general point lying in the span of $x_1,\dots,x_h\in X$ then $T_y\sec_h(X) = \left\langle T_{x_1}X,\dots,T_{x_h}X\right\rangle$. Therefore, if $X$ is $h$-defective then the general hyperplane tangent to $X$ at $h$ points is tangent to $X$ along a positive dimensional subvariety. 

\begin{Definition}
Let $x_1,\dots,x_h\in X$ be general points, and let $H$ be a hyperplane tangent to $X$ at $x_1,\ldots,x_h$. The $h$-contact locus $\Sigma_{x_1,\ldots,x_h,H}$ of $X$ with respect to $x_1,\dots,x_h,H$ is defined as the union of the irreducible components of $\Sing(X\cap H)$ containing at least one of the $x_i$. Now, $X$ is said to be $h$-weakly defective if $\Sigma_{x_1,\ldots,x_h,H}$ has positive dimension for $H$ a general hyperplane containing $\left\langle T_{x_1}X,\dots,T_{x_h}X\right\rangle$.  
\end{Definition} 

Therefore, if $X$ is $h$-defective then it is $h$-weakly defective. However, the converse does not hold in general. For instance, if we denote by $V^n_d\subset\mathbb{P}^N$ the degree $d$ Veronese embedding of $\mathbb{P}^n$ we have that for $(d,n)\in\{(6,2),(4,3),(3,5)\}$ the Veronese $V_d^n$ is never defective but it is respectively $9$-weakly defective, $8$-weakly defective and $9$-weakly defective \cite{CC02}. 

Furthermore, by the infinitesimal Bertini's theorem \cite[Theorem 1.4]{CC02} if $X$ is not $h$-weakly defective then it is $h$-identifiable. Recently, a result translating non secant defectiveness into identifiability has been proven in \cite{CM19}.

\begin{Definition}
Let $x_1,\dots,x_h\in X$ be general points. The $h$-tangential contact locus $\Gamma_{x_1,\dots,x_h}$ of $X$ with respect to $x_1,\dots,x_h$ is the closure in $X$ of
the union of all the irreducible components which contain at least one of the $x_i$ of the locus of points of $X$ where $\left\langle T_{x_1}X,\dots, T_{x_h}X\right\rangle$ is tangent to $X$. Let $\gamma_{x_1,\dots,x_h}$ be the largest dimension of the components of $\Gamma_{x_1,\dots,x_h}$. If $\gamma_{x_1,\dots,x_h} > 0$ we say that $X$ is $h$-tangentially weakly defective.
\end{Definition}

Clearly, if $X$ is $h$-tangentially weakly defective then it is $h$-weakly defective. Moreover, by \cite[Proposition 2.4]{CO12} if $X$ is not $h$-tangentially weakly defective then it is $h$-identifiable. However, the Grassmannian $\mathbb{G}(2,7)$ parametrizing planes in $\mathbb{P}^7$ is $3$-tangentially weakly defective but it is $3$-identifiable \cite[Proposition 1.7]{BV18}.

Finally, we introduce a notion that measures how much a $h$-weakly defective variety is far from being $h$-tangentially weakly defective. 

\begin{Definition}\label{gen_def}
Let $x_1,\dots,x_h\in X$ be general points and $\Pi\subset\mathbb{P}^N$ a linear subspace of dimension $s$ containing $\left\langle T_{x_1}X,\dots, T_{x_h}X\right\rangle$. The $(h,s)$-tangential contact locus $\Gamma_{x_1,\dots,x_h,\Pi}$ of $X$ with respect to $x_1,\dots,x_h, \Pi$ is the closure in $X$ of
the union of all the irreducible components which contain at least one of the $x_i$ of the locus of points of $X$ where $\Pi$ is tangent to $X$. Let $\gamma_{x_1,\dots,x_h,\Pi}$ be the largest dimension of the components of $\Gamma_{x_1,\dots,x_h,\Pi}$. If $\gamma_{x_1,\dots,x_h,\Pi} > 0$ for $\Pi$ general, we say that $X$ is $(h,s)$-tangentially weakly defective.
\end{Definition}

In particular, when $s = \dim\left\langle T_{x_1}X,\dots, T_{x_h}X\right\rangle$ from Definition \ref{gen_def} we recover the notion of $h$-tangential weak defectiveness while for $s = N-1$ we get the notion of $h$-weak defectiveness.

\section{Osculating regularity and weak defectiveness}\label{or-wd}
We begin by proving a simple result on the behavior of contact loci under flat degenerations.

\begin{Lemma}\label{lemma_sc}
Let $X\subset \p^N$ be a projective variety, $\Delta\subset\mathbb{C}$ a complex disk around the origin and $\{\Pi_t\}_{t\in\Delta}$ a family of linear subspaces of $\mathbb{P}^N$. Then
$$\dim(\Sing(\Pi_0\cap X))\geq \dim(\Sing(\Pi_t\cap X))$$
for $t\in \Delta$. 

Furthermore, let $\{\Gamma_t\}_{t\in\Delta}$ be a family of linear subspaces $\Gamma_t\subset\mathbb{P}^N$, $\Lambda\subset\mathbb{P}^N$ a linear subspace containing $\Gamma_0$, and $\Pi$ a linear subspace containing $\Lambda$. Then 
$$\dim(\Sing(\widetilde{\Pi}_t\cap X))\leq \dim(\Sing(\Pi\cap X))$$
where $\widetilde{\Pi}_t$ is a general linear subspace of dimension $\dim(\Pi)$ containing $\Gamma_t$.
\end{Lemma}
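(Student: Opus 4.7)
The approach in both cases is to view the dimension of $\Sing(\Pi\cap X)$ as a function on a parameter space of linear subspaces and to invoke upper semicontinuity of fibre dimensions.

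For the first inequality I would form the incidence subscheme
$$\mathcal{Z}=\{(x,t)\in\p^N\times\Delta\mid x\in\Sing(\Pi_t\cap X)\}\subset\p^N\times\Delta.$$
A point $x\in X\cap\Pi_t$ is singular in $\Pi_t\cap X$ iff either $x\in\Sing(X)\cap\Pi_t$, or $x$ is smooth in $X$ and $T_xX\subset\Pi_t$; each of these is a closed condition on $(x,t)$, so $\mathcal{Z}$ is closed in $\p^N\times\Delta$. The second projection $\mathcal{Z}\to\Delta$ is proper, and its fibre over $t$ equals $\Sing(\Pi_t\cap X)$. Upper semicontinuity of fibre dimension then yields
$$\dim\Sing(\Pi_0\cap X)\ge\dim\Sing(\Pi_t\cap X)$$
after possibly shrinking $\Delta$ around $0$.

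To deduce the second inequality I would reduce to the first by interpolation. Since $\Gamma_0\subset\Lambda\subset\Pi$, the subspace $\Pi$ is a valid point of the central fibre of the relative Grassmannian of $(\dim\Pi)$-dimensional subspaces of $\p^N$ containing $\Gamma_t$; pick a local section $\{\widehat\Pi_t\}_{t\in\Delta}$ of this Grassmann bundle with $\widehat\Pi_0=\Pi$. Applying the first part to $\{\widehat\Pi_t\}$ yields $\dim\Sing(\Pi\cap X)\ge\dim\Sing(\widehat\Pi_t\cap X)$ for general $t$. On the other hand, the same incidence-and-semicontinuity argument, now applied on the Grassmannian of $(\dim\Pi)$-dimensional subspaces containing a fixed $\Gamma_t$, shows that $V\mapsto\dim\Sing(V\cap X)$ attains its minimum on a dense open subset, which by definition contains the general subspace $\widetilde\Pi_t$. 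Chaining
$$\dim\Sing(\widetilde\Pi_t\cap X)\le\dim\Sing(\widehat\Pi_t\cap X)\le\dim\Sing(\Pi\cap X)$$
finishes the argument.

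The hard part is really the construction of the interpolating family $\{\widehat\Pi_t\}$: it rests precisely on the nested hypothesis $\Gamma_0\subset\Lambda\subset\Pi$ together with $\dim\widehat\Pi_t=\dim\Pi$, which together guarantee that $\Pi$ is a legitimate specialisation of subspaces containing $\Gamma_t$ (so that a local section through $\Pi$ does exist in the relative Grassmannian). Once this is in place, the remainder is a routine double application of upper semicontinuity of the dimension of the singular locus of a linear section.
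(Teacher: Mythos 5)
Your proposal is correct and follows essentially the same route as the paper: the same incidence correspondence over $\Delta$ with semicontinuity for the first claim, and for the second claim an interpolating family through $\Pi$ (which the paper constructs explicitly as $\Pi_t=\left\langle \Gamma_t,\Gamma'\right\rangle$ with $\Pi=\left\langle\Gamma_0,\Gamma'\right\rangle$, $\Gamma'\cap\Gamma_0=\emptyset$) followed by a second semicontinuity argument over the Grassmannian of subspaces containing $\Gamma_t$. The only difference is that you assert the existence of the local section abstractly where the paper writes it down; both are fine.
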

\begin{proof}
For the first claim it is enough to consider the variety
$$Y = \{(x,t)\: | \: x\in\Sing(X\cap \Pi_t)\}\subset X\times\Delta$$
with projection $\pi_2:Y\rightarrow \Delta$ and to conclude by semi-continuity.

For the second part note that since $\Gamma_0\subseteq\Lambda$ we have that $\Gamma_0\subseteq \Pi$. Let $\Gamma'\subset \Pi$ be a subspace such that $\Pi = \left\langle \Gamma_0, \Gamma'\right\rangle$, $\Gamma'\cap\Gamma_0 = \emptyset$, and set $\Pi_t = \left\langle \Gamma_t, \Gamma'\right\rangle$. Then $\{\Pi_t\}_{t\in\Delta}$ is a family of linear subspace such that $\Gamma_t\subset \Pi_t$ for all $t\in\Delta$. By the first part of the proof we have $\dim(\Sing(\Pi\cap X))\geq \dim(\Sing(\Pi_t\cap X))$ for all $t\in\Delta$. Now, consider the Grassmannian $\mathbb{G}(\dim(\Pi)-\dim(\Gamma_t)-1,N-\dim(\Gamma_t)-1)$ parametrizing $\dim(\Pi)$-dimensional linear subspaces of $\mathbb{P}^N$ containing $\Gamma_t$, and the variety
$$Z = \{(x,\widetilde{\Pi}_t) \: | \: x\in \Sing(\widetilde{\Pi}_t\cap X)\}\subseteq X\times \mathbb{G}(\dim(\Pi)-\dim(\Gamma_t)-1,N-\dim(\Gamma_t)-1)$$
with projection $\pi_2:Z\rightarrow \mathbb{G}(\dim(\Pi)-\dim(\Gamma_t)-1,N-\dim(\Gamma_t)-1)$. Again by semi-continuity we have
$$\dim(\Sing(\widetilde{\Pi}_t\cap X))\leq \dim(\Sing(\Pi_t\cap X))$$
for $\widetilde{\Pi}_t\in \mathbb{G}(\dim(\Pi)-\dim(\Gamma_t)-1,N-\dim(\Gamma_t)-1)$ general, and hence $\dim(\Sing(\Pi\cap X))\geq \dim(\Sing(\Pi_t\cap X))\geq \dim(\Sing(\widetilde{\Pi}_t\cap X))$.  
\end{proof}

Let $X\subset \p^N$ be a projective variety of dimension $n$, $p\in X$ a smooth point, and 
$$
\begin{array}{cccc}
\phi: &\mathcal{U}\subseteq\mathbb{C}^n& \longrightarrow & \mathbb{C}^{N}\\
      & (t_1,\dots,t_n) & \mapsto & \phi(t_1,\dots,t_n)
\end{array}
$$
with $\phi(0)=p$, a local parametrization of $X$ in a neighborhood of $p\in X$. 

For any $s\geq 0$ let $O^s_pX$ be the affine subspace of $\mathbb{C}^{N}$ passing through $p\in X$, and whose direction is given by the subspace generated by the vectors $\phi_I(0)$, where $I = (i_1,\dots,i_r)$ is a multi-index such that $|I|\leq s$ and $\phi_I = \frac{\partial^{|I|}\phi}{\partial t_1^{i_1}\dots\partial t_r^{i_r}}$.

\begin{Definition}\label{oscdef}
The $s$-\textit{osculating space} $T_p^s X$ of $X$ at $p$ is the projective closure in $\mathbb{P}^N$ of the affine subspace $O^s_pX\subseteq \mathbb{C}^{N}$.
\end{Definition}

For instance, $T_p^0 X=\{p\}$, and $T_p^1 X$ is the usual tangent space of $X$ at $p$. When no confusion arises we will write $T_p^s$ instead of $T_p^sX$. Now, let us recall \cite[Definition 5.5, Assumption 5.2]{MR19} and \cite[Definition 4.4]{AMR17}.

\begin{Definition}\label{osc_reg}
Let $X\subset\mathbb{P}^N$ be a projective variety. We say that $X$ has \textit{$m$-osculating regularity} if the following property holds: given general points $p_1,\dots,p_{m}\in X$ and an integer $s\geq 0$, 
there exists a smooth curve $C$ and morphisms $\gamma_j:C\to X$, $j=2,\dots,m$, 
such that  $\gamma_j(t_0)=p_1$, $\gamma_j(t_\infty)=p_j$, and the flat limit $T_0$ in the Grassmannian of the family of linear spaces 
$$
T_t=\left\langle T^{s}_{p_1},T^{s}_{\gamma_2(t)},\dots,T^{s}_{\gamma_{m}(t)}\right\rangle,\: t\in C\backslash \{t_0\}
$$
is contained in $T^{2s+1}_{p_1}$. 

We say that $X$ has \textit{strong $2$-osculating regularity} if the following property holds: given general points $p,q\in X$ and  integers $s_1,s_2\geq 0$, there exists a smooth curve $\gamma:C\to X$ such that $\gamma(t_0)=p$, $\gamma(t_\infty)=q$ and the flat limit $T_0$ in the Grassmannian of the family of linear spaces 
$$
T_t=\left\langle T^{s_1}_p,T^{s_2}_{\gamma(t)}\right\rangle,\: t\in C\backslash \{t_0\}
$$
is contained in $T^{s_1+s_2+1}_p$.
\end{Definition}
For a discussion on the notions of $m$-osculating regularity and strong $2$-osculating regularity and their application to Grassmannians, Segre-Veronese varieties, Lagrangian Grassmannians and Spinor varieties, and flag varieties we refer to \cite{MR19}, \cite{AMR17}, \cite{FMR20}, \cite{FCM19}.

Now, we define a function $h_m:\mathbb{N}_{\geq0}\longrightarrow\mathbb{N}_{\geq 0}$ counting how many tangent spaces can be degenerated into a higher order osculating space.

\begin{Definition}\label{h_m}
Given an integer $m\geq 0$ we define a function
$$h_m:\mathbb{N}_{\geq0}\longrightarrow\mathbb{N}_{\geq0}$$
as follows: $h_m(0)=0$ and for any $k>0$ write
$$k+1=2^{\lambda_1}+2^{\lambda_2}+\cdots+2^{\lambda_a}+\varepsilon$$
where $\lambda_1>\lambda_2>\cdots>\lambda_a\geq 1$ and $\varepsilon\in \{0,1\}$, then
$$h_m(k)=m^{\lambda_1-1}+m^{\lambda_2-1}+\cdots+m^{\lambda_a-1}$$
\end{Definition}

We are ready to prove the main result of this section relating osculating regularity to tangential weak defectiveness. 

\begin{thm}\label{Non_Weakly_defec}
Let $X\subset \p^N$ be a projective variety having $m$-osculating regularity and strong $2$-osculating regularity. Assume that there exist integers $l,k_1,\ldots,k_l\geq 1$, general points $p_1,\ldots,p_l\in X$ and a linear subspace of dimension $s$ containing $\langle T_{p_1}^{k_1},\ldots,T_{p_l}^{k_l}\rangle$ that is not tangent to $X$ along a positive dimensional subvariety. Set
$$h:=\sum_{j=1}^lh_m(k_j)$$
Then $X$ is not $(h,s)$-tangentially weakly defective.
\end{thm}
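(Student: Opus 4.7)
The plan is to specialize the $h$ general tangent spaces of $X$, via a cascade of degenerations dictated by the two osculating regularity hypotheses, into a configuration whose span is contained in $\Lambda:=\langle T^{k_1}_{p_1},\dots,T^{k_l}_{p_l}\rangle$, and then to invoke the second part of Lemma \ref{lemma_sc} with the $s$-dimensional subspace $\Pi\supseteq\Lambda$ provided by the hypothesis. Explicitly, given general points $x_1,\dots,x_h\in X$ with $h=\sum_j h_m(k_j)$ and a general $s$-dimensional subspace $\Pi'\supseteq\langle T_{x_1}X,\dots,T_{x_h}X\rangle$, the goal is to show that the tangential contact locus of $X$ through the $x_i$ relative to $\Pi'$ is zero-dimensional.

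I would partition the $h$ tangent spaces into $l$ groups, the $j$-th group consisting of $h_m(k_j)$ tangent spaces at points that will eventually converge to the general point $p_j$. For each $j$, using the decomposition $k_j+1=2^{\lambda_1^j}+\cdots+2^{\lambda_{a_j}^j}+\varepsilon_j$ with $\lambda_1^j>\cdots>\lambda_{a_j}^j\geq 1$ and $\varepsilon_j\in\{0,1\}$, I would further split the $j$-th group into $a_j$ sub-groups of sizes $m^{\lambda_1^j-1},\dots,m^{\lambda_{a_j}^j-1}$. Iterating the $m$-osculating regularity hypothesis $\lambda_i^j-1$ times (first $m$ tangent spaces degenerate into a subspace of $T^3$, then $m$ copies of $T^3$ degenerate into a subspace of $T^7$, and so on), the $m^{\lambda_i^j-1}$ tangent spaces of the $i$-th sub-group degenerate into a subspace of $T^{2^{\lambda_i^j}-1}_{q_{j,i}}$ at a general point $q_{j,i}\in X$. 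Then, iterating strong $2$-osculating regularity across $q_{j,1},\dots,q_{j,a_j}$, the span of these osculating spaces further degenerates into a subspace of $T^{(2^{\lambda_1^j}+\cdots+2^{\lambda_{a_j}^j})-1}_{p_j}=T^{k_j-\varepsilon_j}_{p_j}\subseteq T^{k_j}_{p_j}$, where $p_j:=q_{j,1}$ is general. Assembling all $l$ groups, after a standard reparametrization of the multi-stage degeneration by a single disk $\Delta$, yields a one-parameter family $\{\Gamma_t\}_{t\in\Delta}$ of linear subspaces of $\mathbb{P}^N$ whose generic fiber is $\langle T_{x_1(t)}X,\dots,T_{x_h(t)}X\rangle$ for general moving points $x_i(t)\in X$, and whose flat limit satisfies $\Gamma_0\subseteq\Lambda$.

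Applying the second part of Lemma \ref{lemma_sc} to $\{\Gamma_t\}$, to $\Lambda$, and to the subspace $\Pi\supseteq\Lambda$ from the hypothesis, the assumption that $\Pi$ is not tangent to $X$ along any positive-dimensional subvariety gives that the components of $\Sing(\Pi\cap X)$ consisting of smooth points of $X$ and containing the $p_j$ are zero-dimensional. The lemma then produces, for general $t$ and for a general $s$-dimensional subspace $\widetilde{\Pi}_t\supseteq\Gamma_t$, the bound $\dim\Sing(\widetilde{\Pi}_t\cap X)\leq\dim\Sing(\Pi\cap X)\leq 0$ along the components through the $x_i(t)$. Hence $\widetilde{\Pi}_t$ is not tangent to $X$ along any positive-dimensional subvariety passing through the general points $x_i(t)$, which is precisely the statement that $X$ is not $(h,s)$-tangentially weakly defective.

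The main obstacle will be the careful orchestration of the nested osculating-regularity degenerations into a single coherent one-parameter family $\{\Gamma_t\}_{t\in\Delta}$, and the verification that at every intermediate stage the intermediate points $q_{j,i}$ (and then the $p_j$) remain sufficiently general for the $m$-osculating regularity and strong $2$-osculating regularity hypotheses to genuinely apply. The combinatorics captured by $h_m$ via the binary-like expansion $k_j+1=\sum 2^{\lambda_i^j}+\varepsilon_j$ records precisely the carrying capacity of each osculating space $T^{k_j}_{p_j}$ under iterated degenerations of these two allowed types, and the only "waste" is the term $\varepsilon_j$, which cannot be absorbed because it would correspond to a forbidden $\lambda=0$ and hence to a nonexistent $m^{-1}$ factor.
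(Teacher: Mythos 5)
Your proposal is correct and follows essentially the same route as the paper: degenerate the $h$ tangent spaces group by group according to the expansion $k_j+1=\sum_i 2^{\lambda_i^j}+\varepsilon_j$, first collapsing sub-groups of $m^{\lambda_i^j-1}$ tangent spaces into osculating spaces $T^{2^{\lambda_i^j}-1}$ via iterated $m$-osculating regularity, then merging these via strong $2$-osculating regularity into $T^{k_j-\varepsilon_j}_{p_j}\subseteq T^{k_j}_{p_j}$, and finally invoking the semicontinuity statement of Lemma \ref{lemma_sc} against the hypothesized $s$-dimensional subspace. The paper's proof is the same argument written out for $l=1$; your additional remarks on reparametrizing the cascade into a single family and on the genericity of the intermediate points are sensible points of care that the paper leaves implicit.
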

\begin{proof}
Let us consider the linear span 
$$T = \left\langle T^1_{p_1^1},\dots, T^1_{p_1^{h_m(k_1)}},\dots, T^1_{p_l^1},\dots,T^1_{p_l^{h_m(k_l)}}\right\rangle$$
and $p_1^1 = p_1,\dots, p_l^1 = p_l$. For seek of notational simplicity along the proof we will assume $l = 1$. For the general case it is enough to apply the same argument $l$ times.

Let us begin with the case $k_1+1 = 2^{\lambda}$. Then $h_{m}(k_1) = m^{\lambda-1}$. Since $X$ has $m$-osculating regularity we can degenerate $T$, in a family parametrized by a smooth curve, to a linear space $U_1$ contained in 
$$V_1 = \left\langle T^{3}_{p_1^1}, T^3_{p_1^{m+1}},\dots, T^3_{p_1^{m^{\lambda-1}-m+1}}\right\rangle$$ 
Again, since $X$ has $m$-osculating regularity we may specialize, in a family parametrized by a smooth curve, the linear space $V_1$ to a linear space $U_2$ contained in
$$V_2 = \left\langle T^{7}_{p_1^1}, T^7_{p_1^{m^2+1}},\dots, T^7_{p_1^{m^{\lambda-1}-m^2+1}}\right\rangle$$
Proceeding recursively in this way in last step we get a linear space $U_{\lambda-1}$ which is contained in 
$$V_{\lambda-1} = T^{2^{\lambda}-1}_{p_1^1}$$
Now, more generally, let us assume that 
$$k_1+1 = 2^{\lambda_1}+\dots + 2^{\lambda_a}+\varepsilon$$
with $\varepsilon\in\{0,1\}$, and $\lambda_1 > \lambda_2 > \dots > \lambda_a\geq 1$. Then
$$h_m(k_1) = m^{\lambda_1-1}+\dots + m^{\lambda_a-1}$$
By applying $a$ times the argument for $k_1+1 = 2^{\lambda}$ in the first part of the proof we may specialize $T$ to a linear space $U$ contained in 
$$V = \left\langle T^{2^{\lambda_1}-1}_{p_1^1}, T^{2^{\lambda_2}-1}_{p_1^{m^{\lambda_1-1}+1}},\dots, T^{2^{\lambda_a}-1}_{p_1^{m^{\lambda_1-1}+\dots+m^{\lambda_{a-1}-1}+1}}\right\rangle$$ 
Finally, using that $X$ has strong $2$-osculating regularity $a-1$ times we specialize $V$ to a linear space $U^{'}$ contained in 
$$V^{'} = T_{p_1^1}^{2^{\lambda_1}+\dots +2^{\lambda_a-1}}$$
Note that $T_{p_1^1}^{2^{\lambda_1}+\dots +2^{\lambda_a}-1} = T^{k_1}_{p_1^1}$ if $\varepsilon = 0$, and $T_{p_1^1}^{2^{\lambda_1}+\dots +2^{\lambda_a}-1} = T^{k_1-1}_{p_1^1}\subset T^{k_1}_{p_1^1}$ if $\varepsilon = 1$. In any case, since by hypothesis there is an $s$-dimensional linear subspace containing $\langle T_{p_1}^{k_1},\ldots,T_{p_l}^{k_l}\rangle$ that is not tangent to $X$ along a positive dimensional subvariety we conclude by Lemma \ref{lemma_sc}. 
\end{proof}

\section{On tangential weak defectiveness of Segre-Veronese varieties}\label{wd-SV}

Let $\textbf{\textit{n}}=(n_1,\dots,n_r)$ and $\textbf{\textit{d}} = (d_1,\dots,d_r)$ be two $r$-uples of positive integers, with $n_1\leq \dots \leq n_r$ and $d=d_1+\dots+d_r\geq 3$. 
Let $SV^{\textbf{\textit{n}}}_{\textbf{\textit{d}}}\subset\mathbb{P}^{N(\textbf{\textit{n}},\textbf{\textit{d}})}$, where $N(\textbf{\textit{n}},\textbf{\textit{d}}) = \prod_{i=1}^r\binom{n_i+d_i}{d_i}-1$, be the corresponding Segre-Veronese variety that is the product $\mathbb{P}^{n_1}\times\dots\times\mathbb{P}^{n_r}$ embedded by the complete 
linear system  $\big|\mathcal{O}_{\mathbb{P}^{n_1}\times\dots\times\mathbb{P}^{n_r}}(d_1,\dots, d_r)\big|$. We recall the notion of distance for Segre-Veronese varieties given in \cite[Definition 2.4]{AMR17}.

\begin{Definition}\label{distance}
Let $n$ and $d$ be positive integers, and set 
$$
\Lambda_{n,d}=\{I=\{i_1,\dots,i_{d}\},0\leq i_1 \leq \dots \leq i_{d} \leq n\}
$$
For $I,J\in \Lambda_{n,d}$, we define their distance $d(I,J)$ as the number of different coordinates.
More precisely, write $I=\{i_1,\dots,i_{d}\}$ and $J=\{j_1,\dots,j_{d}\}$. 
There are $r\geq 0$ distinct indexes $\lambda_1, \dots, \lambda_r\subset \{1, \dots, d\}$ and 
distinct indexes $\tau_1, \dots, \tau_r\subset \{1, \dots, d\}$ such that $i_{\lambda_k}=j_{\tau_k}$ for every $1\leq k\leq r$, and 
$$\{i_\lambda \: | \: \lambda\neq \lambda_1, \dots, \lambda_r\}\cap \{j_\tau \: | \: \tau\neq \tau_1, \dots, \tau_r\}=\emptyset$$
Then $d(I,J)=d-r$. Now, set 
$$
\Lambda=\Lambda_{\textbf{\textit{n}},\textbf{\textit{d}}}=\Lambda_{n_1,d_1}\times \dots \times \Lambda_{n_r,d_r}
$$
For $I=(I^1,\dots,I^r),J=(J^1,\dots,J^r)\in \Lambda$, 
we define their distance as 
$$
d(I,J)=d(I^1,J^1)+\dots+d(I^r,J^r)
$$
\end{Definition}
Such a distance, called the Hamming distance, was defined in \cite[Section 2]{CGG02} for Segre varieties. We will denote the homogeneous coordinates and the corresponding coordinate points of $\p^{N(\textbf{\textit{n}},\textbf{\textit{d}})}$ by $X_J$ and $e_J$ respectively, for $J\in\Lambda$. 

\begin{Proposition}\label{Prop1}
Let $p_0,\dots,p_{n_1}\in SV_{\textbf{\textit{d}}}^{\textbf{\textit{n}}}$ be general points. If $d:=\min\{d_1,\ldots,d_r\}\geq 2$ then a general hyperplane $H\subset\p^N$ containing $T=\langle T_{p_0}^{d-1}SV_{\textbf{\textit{d}}}^{\textbf{\textit{n}}},\ldots,T_{p_{n_1}}^{d-1}SV_{\textbf{\textit{d}}}^{\textbf{\textit{n}}}\rangle$ is not tangent to $SV_{\textbf{\textit{d}}}^{\textbf{\textit{n}}}$ along a positive dimensional subvariety.
\end{Proposition}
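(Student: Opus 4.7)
My plan is to specialize the general points to a distinguished coordinate configuration, translate the statement into a question about singular loci of polynomials with prescribed vanishing, and then transfer back to the generic case via the semi-continuity supplied by Lemma \ref{lemma_sc}.

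First I would specialize $p_0,\dots,p_{n_1}$ to a coordinate configuration such as $p_i^{(0)}=([e_i^{(1)}],[e_0^{(2)}],\dots,[e_0^{(r)}])$ (or a mild variant adapted to $(\textbf{\textit{d}},\textbf{\textit{n}})$) and apply the second part of Lemma \ref{lemma_sc}. With $\Gamma_t$ taken to be the family $T(t)=\langle T_{p_i(t)}^{d-1}\rangle$ degenerating to $T^{(0)}$ as $t\to 0$, the lemma guarantees that once we exhibit a single hyperplane $\Pi^{(0)}\supset T^{(0)}$ with $\dim\Sing(\Pi^{(0)}\cap SV_{\textbf{\textit{d}}}^{\textbf{\textit{n}}})=0$, a general hyperplane $\widetilde{\Pi}_t$ containing $T(t)$ inherits the bound $\dim\Sing(\widetilde{\Pi}_t\cap SV_{\textbf{\textit{d}}}^{\textbf{\textit{n}}})\leq 0$; taking $t$ generic yields the proposition.

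Next I would reformulate the existence of $\Pi^{(0)}$ in polynomial language. In the coordinates $X_J$ indexed by $J\in\Lambda$, the osculating space $T_{p_i^{(0)}}^{d-1}$ is spanned by the coordinate points $e_J$ with $d(J,J_i)\leq d-1$ (via the Hamming distance of Definition \ref{distance}). Thus a hyperplane $H\supset T^{(0)}$ corresponds to a multihomogeneous polynomial $F$ of multidegree $\textbf{\textit{d}}$ on $\p^{n_1}\times\dots\times\p^{n_r}$ vanishing to order at least $d$ at each $p_i^{(0)}$, and the contact locus of $H$ on $SV_{\textbf{\textit{d}}}^{\textbf{\textit{n}}}$ coincides with $\Sing V(F)$. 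The task becomes producing one such $F$ whose ambient singular locus is finite (equivalently, a nondegenerate member of the linear system $\mathcal{L}$ of multidegree $\textbf{\textit{d}}$ forms with this vanishing). My plan is to analyze the base locus $B$ of $\mathcal{L}$ by a combinatorial count of which monomials survive the constraints $d(J,J_i)>d-1$ for all $i$, then invoke Bertini on the smooth $SV_{\textbf{\textit{d}}}^{\textbf{\textit{n}}}$ to conclude that a general member of $\mathcal{L}$ has $\Sing V(F)\cap SV_{\textbf{\textit{d}}}^{\textbf{\textit{n}}}\subseteq B$, and finally to compute the ambient partial derivatives of $F$ at points of $B$, checking that for generic $F$ they vanish simultaneously only at the $p_i^{(0)}$.

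The hard part will be controlling $B$ and the relevant transverse partials, because the combinatorics depends sensitively on the comparison between $d_1$ and $d$. When $d_1=d$ the restriction of $\mathcal{L}$ to the slice $Z=\p^{n_1}\times\{[e_0^{(2)}]\}\times\dots\times\{[e_0^{(r)}]\}$ is forced to vanish (too many order-$d$ conditions on $\p^{n_1}$ at its $n_1+1$ coordinate points), so $Z\subseteq B$; the $y_j^{(l)}$-derivatives restricted to $Z$ then live in the subspace $W\subseteq H^0(\mathcal{O}_{\p^{n_1}}(d_1))$ spanned by monomials $x_0^{\alpha_0}\cdots x_{n_1}^{\alpha_{n_1}}$ with $\max_i\alpha_i\leq d_1-d+1$, and one must verify that the base locus of $W$ in $\p^{n_1}$ is exactly the coordinate set $\{[e_0^{(1)}],\dots,[e_{n_1}^{(1)}]\}$, reducing $\Sing V(F)\cap Z$ to the $p_i^{(0)}$ for generic $F$. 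When $d_1>d$, by contrast, $B$ is typically just $\{p_i^{(0)}\}$ and Bertini already suffices. A possibly case-by-case choice of specialization (and, for difficult ranges of parameters, a different set of coordinate points placed off a common slice so as to shrink $B$) is needed to unify these cases; once done, everything else is formal semi-continuity.
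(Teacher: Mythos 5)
Your overall architecture (reduce to a coordinate configuration, exhibit one good hyperplane, transfer back by Lemma \ref{lemma_sc}) matches the paper's, but your choice of specialization introduces a genuine gap. The configuration $p_i^{(0)}=([e_i^{(1)}],[e_0^{(2)}],\dots,[e_0^{(r)}])$ is \emph{not} a general configuration: all projections to the factors $\mathbb{P}^{n_2},\dots,\mathbb{P}^{n_r}$ coincide, so passing to it is a true degeneration of the points, not a change of coordinates. Consequently the osculating spaces at the $p_i^{(0)}$ overlap (already at the tangent level, two points on the same slice $\mathbb{P}^{n_1}\times\{e_0\}\times\dots\times\{e_0\}$ share the tangent directions of the other factors), and $\dim\langle T^{d-1}_{p_0^{(0)}},\dots,T^{d-1}_{p_{n_1}^{(0)}}\rangle$ is strictly smaller than $\dim T(t)$ for general $t$. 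The flat limit $\Gamma_0=\lim_{t\to 0}T(t)$ therefore strictly contains your $T^{(0)}$, and the second part of Lemma \ref{lemma_sc} compares the general hyperplane through $T(t)$ with a hyperplane containing $\Gamma_0$, not one merely containing $T^{(0)}$. Exhibiting $\Pi^{(0)}\supset T^{(0)}$ with finite singular locus proves nothing about hyperplanes through $\Gamma_0$, so the semicontinuity step does not close. This same special position is also what forces the slice $Z$ into the base locus and produces the unresolved case analysis ($d_1=d$ versus $d_1>d$, the auxiliary system $W$, base-locus computations) that you acknowledge but do not carry out.

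The paper avoids all of this by a different specialization: since $n_1+1\leq n_i+1$ for every $i$, the group $PGL(n_1+1)\times\dots\times PGL(n_r+1)$ moves $n_1+1$ \emph{general} points exactly onto the diagonal coordinate points $e_{I_i}$ with $I_i=(\{i,\dots,i\},\dots,\{i,\dots,i\})$. No degeneration of points occurs, so no flat limit has to be controlled; Lemma \ref{lemma_sc} is invoked only to pass from one explicit hyperplane through $T$ to the general one. For that explicit hyperplane the paper takes $\sum_{J\in\Lambda_i}X_J=0$, whose intersection with $SV_{\textbf{\textit{d}}}^{\textbf{\textit{n}}}$ in the affine chart around $p_i$ is the Fermat-type hypersurface $\sum X_{l,j}^{d_l}=0$, visibly with an isolated singularity at $p_i$ (here $d_l\geq d\geq 2$ is used). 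If you want to keep your strategy, you should either replace your configuration by one in which the projections to every factor remain in linearly general position, or explicitly compute the flat limit $\Gamma_0$ and produce a hyperplane containing it; as written, the argument does not establish the proposition.
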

\begin{proof}
Since $PGL(n_{1}+1)\times\dots\times PGL(n_{r}+1)$ acts transitively on $SV_{\textbf{\textit{d}}}^{\textbf{\textit{n}}}$ we may assume that $p_i=e_{I_i}$, where $I_i=(\{i,\ldots,i\},\ldots,\{i,\ldots,i\})$. By \cite[Proposition 2.5]{AMR17} $T_{e_{I_i}}^{d-1}=\langle e_J\:|\:d(I_i,J)\leq d-1\rangle$, and hence
$$\begin{array}{ccl}
\langle T_{e_{I_0}}^{d-1},\ldots,T_{e_{I_{n_1}}}^{d-1}\rangle&=&\langle e_J\:|\:d(I_i,J)\leq d-1\:\:\text{ for some }\:i=0,\ldots n_1\rangle\\
&=&\{X_J=0\:|\:d(I_i,J)>d-1\:\text{ for all }\:i=0,\ldots n_1\}
\end{array}$$
Now, let $H\subset\p^{N(\textbf{\textit{n}},\textbf{\textit{d}})}$ be a general hyperplane containing $T$. We have that $H$ is given by an equation of type
\stepcounter{thm}
\begin{equation}\label{Eq.H}
\sum_{J\in \Lambda\:|\:d(I_i,J)>d-1, \forall\: i = 0,\dots,n_1}\alpha_J X_J=0,\:\:\alpha_J\in \C
\end{equation}
Let us denote by $\p^{N(\textbf{\textit{n}},\textbf{\textit{d}})-\dim(T)-1}$ the projective space whose homogeneous coordinates are the $\alpha_J$ with $J\in \Lambda$ and $d(I_i,J)>d-1$ for all $i=0,\ldots,n_1$. Now, for each fixed $i=0,\ldots,n_1$ we consider the following subset of $\Lambda$: for each $1\leq l\leq r$ and $0\leq j\leq n_l$ with $j\neq i$ let
$$
J_{i,j,l}=(J_1,\ldots,J_r)\in\Lambda\:\text{ where }\:J_l=\{j,\ldots,j\}\:\text{  and }\:J_{k}=\{i,\ldots,i\}\:\text{ for }\:k\neq l
$$
and set $\Lambda_i=\{J_{i,j,l}\in\Lambda\:|\:\text{ for all }1\leq l\leq r\:\text{ and } 0\leq j\leq n_l\:\text{ with }\:j\neq i\}\:$.

Observe that, since $d = \min\{d_i\}$ and $j\neq i$, each $J\in \Lambda_i$ satisfies $d(I_i,J) \geq d > d-1$ for all $i=0,\ldots,n_1$. Consider the projection
$$\begin{array}{ccccl}
\pi_i&:&\p^{N(\textbf{\textit{n}},\textbf{\textit{d}})-\dim(T)-1}&\dasharrow&\p^{\sum_{i\neq j}n_j}\\
&&(\alpha_J)_{J\in\Lambda\:|\:d(I_l,J)>d-1\:l=0,\ldots,n_1}&\longmapsto&(\alpha_J)_{J\in\Lambda_i}
\end{array}$$
the point $[1:\dots:1]\in\p^{\sum_{j\neq i}n_j}$ and let $H\in\pi_i^{-1}([1:\dots:1])$ be the hyperplane given by $\sum_{J\in \Lambda_i} X_J=0$. The intersection $H\cap SV_{\textbf{\textit{d}}}^{\textbf{\textit{n}}}$ corresponds to the hypersurface 
\stepcounter{thm}
\begin{equation}\label{gen_equ}
\sum_{J\in \Lambda_i} X_{1,i}^{d_1}\cdots X_{l,j}^{d_l}\cdots X_{r,i}^{d_r}=0
\end{equation}
where $X_{l,j}$ for $j=0,\ldots,n_l$ are the homogeneous coordinates on $\p^{n_l}$. Thus, in the affine chart $X_{1,i}=\cdots=X_{r,i}=1$ equation (\ref{gen_equ}) becomes
\stepcounter{thm}
\begin{equation}\label{local_equ}
\sum_{\substack{1\leq l\leq r\\ 0\leq j\leq n_l,\:j\neq i}} X_{l,j}^{d_l}=0
\end{equation}
The singular locus of $H\cap SV_{\textbf{\textit{d}}}^{\textbf{\textit{n}}}$ in the affine chart $X_{1,i}=\cdots=X_{r,i}=1$ is given by the following system of equations
$$\{d_lX_{l,j}^{d_l-1}=0\}_{1\leq l\leq r, \:0\leq j \leq n_l, \:j\neq i}$$
The only solution of this system is $X_{l,j}=0$, and so the hypersurface (\ref{local_equ}) is singular only at $p_{0}=(0,\dots,0)$. Therefore, we conclude that the intersection of $SV_{\textbf{\textit{d}}}^{\textbf{\textit{n}}}$ with a general hyperplane $H$ containing $T$ is singular, in a neighborhood of $p_0$, only at $p_{0}$. Since this argument holds for each $i=0,\ldots,n_1$ 
using Lemma \ref{lemma_sc} we get the claim.
\end{proof}

\begin{Proposition}\label{Prop2}
Let $p_0,\dots,p_{n_1}\in SV_{\textbf{\textit{d}}}^{\textbf{\textit{n}}}$ be general points and assume that $d=d_1\leq d_i-2$ for each $i\neq 1$. Then a general hyperplane $H\subset\p^N$ containing $T = \langle T_{p_0}^{d}SV_{\textbf{\textit{d}}}^{\textbf{\textit{n}}},\dots,T_{p_{n_1}}^{d}SV_{\textbf{\textit{d}}}^{\textbf{\textit{n}}}\rangle$ is not tangent to $SV_{\textbf{\textit{d}}}^{\textbf{\textit{n}}}$ along a positive dimensional subvariety.
\end{Proposition}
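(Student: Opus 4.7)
The strategy is to mimic the proof of Proposition \ref{Prop1}: by the transitive action of $PGL(n_1+1)\times\cdots\times PGL(n_r+1)$ on $SV_{\textbf{\textit{d}}}^{\textbf{\textit{n}}}$ we may assume $p_i=e_{I_i}$ with $I_i=(\{i,\dots,i\},\dots,\{i,\dots,i\})$, so by \cite[Proposition 2.5]{AMR17} we have $T=\langle e_A\mid d(I_i,A)\leq d\text{ for some }i=0,\dots,n_1\rangle$ and a hyperplane $H\supset T$ has an equation $\sum\alpha_A X_A=0$ with $\alpha_A=0$ whenever $d(I_i,A)\leq d$ for some $i$. For each fixed $i_0$ the plan is to exhibit an explicit hyperplane $H_{i_0}\supset T$ whose local equation at $p_{i_0}$ has the origin as its only critical point, so that Lemma \ref{lemma_sc}, applied in a neighbourhood of each $p_{i_0}$, propagates the isolated-singularity property to a general hyperplane containing $T$.

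The delicate new ingredient, compared with Proposition \ref{Prop1}, is that one cannot use fully-flipped first-factor multi-indices: the index $(\{j,\dots,j\},I_{i_0}^2,\dots,I_{i_0}^r)$ satisfies $d(I_{i_0},\cdot)=d_1=d$ and so belongs to $T_{p_{i_0}}^d$. We replace it with multi-indices of distance exactly $d+1$ supported in two factors at once. More precisely, for fixed $i_0$ we introduce two families: \emph{Type A}, consisting of the $A^{(l,j_l)}$ (for $l\geq 2$ and $j_l\in\{0,\dots,n_l\}\setminus\{i_0\}$) whose $l$-th entry is $\{j_l,\dots,j_l,i_0,\dots,i_0\}$ with $d+1$ copies of $j_l$ and whose other entries equal $I_{i_0}^k$; and \emph{Type B}, consisting of the $B^{(j)}$ (for $j\in\{0,\dots,n_1\}\setminus\{i_0\}$) whose first entry is $\{j,\dots,j\}$, whose second entry is $\{j,i_0,\dots,i_0\}$ (one $j$ and $d_2-1$ copies of $i_0$), and whose remaining entries equal $I_{i_0}^k$. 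Using $d_l\geq d+2$ for $l\geq 2$, together with the ordering $n_1\leq n_2$ that makes every $j\in\{0,\dots,n_1\}\setminus\{i_0\}$ an admissible index in the second factor, a short case analysis shows $d(I_i,A^{(l,j_l)})\geq d+1$ and $d(I_i,B^{(j)})\geq d+1$ for every $i\in\{0,\dots,n_1\}$, so both families provide admissible monomials for the hyperplane equation of $H_{i_0}$.

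Taking $H_{i_0}$ to be the hyperplane defined by $\sum X_{A^{(l,j_l)}}+\sum X_{B^{(j)}}=0$, in the affine chart $X_{k,i_0}=1$ its equation becomes
$$F=\sum_{l\geq 2,\,j_l\neq i_0}X_{l,j_l}^{d+1}+\sum_{j\neq i_0}X_{1,j}^d X_{2,j}.$$
The partials $\partial F/\partial X_{l,s}=(d+1)X_{l,s}^d$ for $l\geq 3$ and $\partial F/\partial X_{2,j}=(d+1)X_{2,j}^d$ for $j\in\{n_1+1,\dots,n_2\}\setminus\{i_0\}$ immediately force the corresponding variables to vanish. For each $j\in\{0,\dots,n_1\}\setminus\{i_0\}$ the two surviving partials
$$\frac{\partial F}{\partial X_{1,j}}=d\,X_{1,j}^{d-1}X_{2,j},\qquad \frac{\partial F}{\partial X_{2,j}}=(d+1)X_{2,j}^d+X_{1,j}^d$$
force $X_{1,j}=X_{2,j}=0$ by a short case split: if $X_{2,j}=0$ the second equation gives $X_{1,j}=0$, while if $X_{1,j}=0$ the second equation gives $X_{2,j}=0$. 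Hence the critical locus of $F$ reduces to the origin, so $H_{i_0}\cap SV_{\textbf{\textit{d}}}^{\textbf{\textit{n}}}$ is singular only at $p_{i_0}$ in a neighbourhood of $p_{i_0}$; applying Lemma \ref{lemma_sc} for each $i_0$ then concludes.

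The main obstacle is the design of the Type B family: one needs the partials of $F$ in the $X_{1,j}$-direction to propagate to the $X_{2,j}$-direction and conversely, and this is exactly what the coupling $X_{1,j}\leftrightarrow X_{2,j}$ encoded in the monomial $X_{1,j}^d X_{2,j}$ achieves. The injection $\{0,\dots,n_1\}\setminus\{i_0\}\hookrightarrow\{0,\dots,n_2\}\setminus\{i_0\}$ granted by $n_1\leq n_2$ (and used to build the Type B indices) is essential for isolating the origin despite the absence of pure first-factor monomials of degree $d+1$.
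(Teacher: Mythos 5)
Your proposal is correct and follows essentially the same route as the paper's proof: fix each $p_{i_0}$, exhibit an explicit admissible hyperplane whose local equation at $p_{i_0}$ has an isolated critical point at the origin, and conclude via Lemma \ref{lemma_sc}; your Type B indices coincide exactly with the paper's family $\Lambda_{i,2}$, and your Type A indices differ from the paper's $\Lambda_{i,1}$ (which uses $J_l=\{i,j,\dots,j\}$, giving local terms $X_{l,j}^{d_l-1}$ instead of your $X_{l,j_l}^{d+1}$) only in an inessential choice of exponents. The resulting case analysis on the partial derivatives is the same coupling argument between $X_{1,j}$ and $X_{2,j}$ used in the paper.
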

\begin{proof}
As in Proposition \ref{Prop1} we may assume that $p_i=e_{I_i}$, with $I_i=(\{i,\ldots,i\},\ldots,\{i,\ldots,i\})$. By \cite[Proposition 2.5]{AMR17} $T_{e_{I_i}}^{d}=\langle e_J\:|\:d(I_i,J)\leq d\rangle$. Hence
$$\begin{array}{ccl}
\langle T_{e_{I_0}}^{d},\ldots,T_{e_{I_{n_1}}}^{d}\rangle&=&\langle e_J\:|\:d(I_i,J)\leq d\:\:\text{ for some }\:i=0,\ldots n_1\rangle\\
&=&\{X_J=0\:|\:d(I_i,J)>d\:\text{ for all }\:i=0,\ldots n_1\}
\end{array}$$
Now, let $H\subset\p^{N(\textbf{\textit{n}},\textbf{\textit{d}})}$ be a general hyperplane containing $T$. We have that $H$ is given by an equation of type
$$
\sum_{J\in \Lambda\:|\:d(I_i,J)>d, \forall\: i = 0,\dots,n_1}\alpha_J X_J=0,\:\:\alpha_J\in \C
$$
Let us denote by $\p^{N(\textbf{\textit{n}},\textbf{\textit{d}})-\dim(T)-1}$ the projective space whose homogeneous coordinates are the $\alpha_J$ with $J\in \Lambda$ and $d(I_i,J)>d$ for all $i=0,\dots,n_1$. Now, for each fixed $i=0,\ldots,n_1$ we consider the following subset of $\Lambda$: for each $2\leq l\leq r$ and $0\leq j\leq n_l$ with $j\neq i$ set 
$$
J_{i,j,l}=(J_1,\ldots,J_r)\in\Lambda\:\text{ where }\:J_l=\{i,j,\ldots,j\},\: J_{k}=\{i,\ldots,i\}\:\text{ for }\:k\neq l
$$
and $\Lambda_{i,1}=\{J_{i,j,l}\in\Lambda\:|\:\text{ for all }\:j,l\neq i\}$.

Moreover, we also consider another subset of $\Lambda$ defined as follows: for each $0\leq j\leq n_1$ with $j\neq i$ let
$$J_{i,j}=(J_1,\ldots,J_r)\in\Lambda\text{ where }\:J_1=\{j,\ldots,j\},\:\:J_2=\{j,i,\ldots,i\},\:J_{k}=\{i,\ldots,i\}\:\text{ for }\:k\neq 1,2$$
and $\Lambda_{i,2}=\{J_{i,j,l}\in\Lambda\:|\:\text{ for all }\:j,l\neq i\}\:$, $\Lambda_i=\Lambda_{i,1}\cup\Lambda_{i,2}$.

Observe that, since $d = d_1 < d_i-2$ for $i \neq 1$ and $j\neq i$, each $J\in \Lambda_i$ satisfies $d(I_l,J)\geq d+1 > d$ for all $l=0,\ldots,n_1$. Therefore, we have a projection
$$\begin{array}{ccccl}
\pi_i&:&\p^{N(\textbf{\textit{n}},\textbf{\textit{d}})-\dim(T)-1}&\dasharrow&\p^{\sum_{i\neq j}n_j}\\
&&(\alpha_J)_{J\in\Lambda\:|\:d(I_l,J)>d\:l=0,\ldots,n_1}&\longmapsto&(\alpha_J)_{J\in\Lambda_i}
\end{array}$$
Now, consider the point $[1:\cdots:1]\in\p^{\sum_{j\neq i}n_j}$ and let $H\in\pi_i^{-1}([1:\cdots:1])$ be the hyperplane given by 
$$
\sum_{J\in \Lambda_i} X_J=0
$$
The intersection $H\cap SV_{\textbf{\textit{d}}}^{\textbf{\textit{n}}}$ corresponds to the hypersurface 
\stepcounter{thm}
\begin{equation}\label{gen_equ1}
\sum_{J\in \Lambda_{i,1}} X_{1,i}^{d_1}\cdots X_{l,i} X_{l,j}^{d_l-1}\cdots X_{r,i}^{d_r}+\sum_{J\in \Lambda_{i,2}} X_{1,j}^{d_1}X_{2,j}X_{2,i}^{d_2-1} X_{3,i}^{d_3}\cdots X_{r,i}^{d_r}=0
\end{equation}
where $X_{j,i}$, $i=0,\ldots,n_j$, are the homogeneous coordinates on $\p^{n_j}$. Thus, in the affine chart $X_{1,i}=\cdots=X_{r,i}=1$ the equation (\ref{gen_equ1}) becomes
\stepcounter{thm}
\begin{equation}\label{local_equ1}
F = \sum_{\substack{2\leq  l\leq r\\0\leq j\leq n_l,\:j\neq i}} X_{l,j}^{d_l-1}+\sum_{0\leq j\leq n_1,\:j\neq i} X_{1,j}^{d_1}X_{2,j}=0
\end{equation}
The system of the partial derivatives of $F$ is given by
$$\left\{\begin{array}{l}
d_{1}X_{1,j}^{d_{1}-1}X_{2,j}=0\\
(d_{2}-1)X_{2,j}^{d_{2}-2}+X_{1,j}^{d_{1}}=0\\
(d_l-1)X_{l,j}^{d_l-2}=0,\:l=3,\ldots,r\:\text{ and }\:j\neq i
\end{array}\right.$$
This system has a solution only when all the coordinates $X_{l,j}$ vanish, and so the hypersurface $\{F=0\}$ in (\ref{local_equ1}) is singular only at $p_{0}=(0,\dots,0)$. Therefore, we conclude that for a general hyperplane $H$ containing $T$ the hypersurface $H\cap SV_{\textbf{\textit{d}}}^{\textbf{\textit{n}}}$ is singular, in a neighborhood of $p_0$, only at $p_{0}$. Since this argument holds for each $i=0,\ldots,n_1$ using Lemma \ref{lemma_sc} we get the statement. 
\end{proof}

\begin{thm}\label{Bound_non_Wd}
Set $d := \min\{d_1,\ldots,d_r\}$. If
\begin{itemize}
\item[-] $h\leq (n_1+1)h_{n_1+1}(d-1)$ or
\item[-] $h\leq (n_1+1)h_{n_1+1}(d)$ and $d=d_1\leq d_i-2$ for each $2\leq i\leq r$
\end{itemize} 
then $SV_{\textbf{\textit{d}}}^{\textbf{\textit{n}}}$ is not $h$-weakly defective.
\end{thm}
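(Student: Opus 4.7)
The plan is to feed the non-tangency inputs supplied by Propositions \ref{Prop1} and \ref{Prop2} into the osculating-degeneration machinery of Theorem \ref{Non_Weakly_defec}. First I would record that Segre-Veronese varieties possess $(n_1+1)$-osculating regularity and strong $2$-osculating regularity (established in \cite{AMR17}), so Theorem \ref{Non_Weakly_defec} is available with $m = n_1+1$. This lets us trade collections of ordinary tangent spaces for higher-order osculating spaces at fewer points, provided the corresponding osculating span is not tangent to $SV_{\textbf{\textit{d}}}^{\textbf{\textit{n}}}$ along a positive-dimensional subvariety.

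For the first bound, I would take $l = n_1+1$ general points $p_0,\ldots,p_{n_1}\in SV_{\textbf{\textit{d}}}^{\textbf{\textit{n}}}$ and set $k_j = d-1$ for every $j$. Proposition \ref{Prop1} then gives exactly the hypothesis required by Theorem \ref{Non_Weakly_defec} at $s = N-1$: a general hyperplane containing $\langle T_{p_0}^{d-1},\ldots,T_{p_{n_1}}^{d-1}\rangle$ is not tangent along a positive-dimensional subvariety. Applying Theorem \ref{Non_Weakly_defec} therefore yields non $(h,N-1)$-tangential weak defectiveness, hence non $h$-weak defectiveness, for $h = \sum_{j=0}^{n_1} h_{n_1+1}(d-1) = (n_1+1)\,h_{n_1+1}(d-1)$. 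The second bound is proved in the same way, with Proposition \ref{Prop2} in place of Proposition \ref{Prop1} and $k_j = d$ in place of $k_j = d-1$, producing non $h$-weak defectiveness at $h = (n_1+1)\,h_{n_1+1}(d)$.

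To extend the conclusion from these top values to every $h$ in the asserted range, I would vary the parameters in the above recipe: selecting $l' \leq n_1+1$ of the general points and allowing the exponents $k_j'$ to lie in $\{1,\ldots,d-1\}$ (respectively $\{1,\ldots,d\}$), one obtains a smaller linear span for which the non-tangency conclusion of Propositions \ref{Prop1}, \ref{Prop2} still applies, since their proofs pass verbatim to any subset of coordinate points and to any osculating order bounded by $d-1$ (resp.\ $d$). Theorem \ref{Non_Weakly_defec} then delivers non $h$-weak defectiveness for $h = \sum_{j=1}^{l'} h_{n_1+1}(k_j')$. The main technical delicacy is the combinatorial step of verifying that such parameter choices realize every integer in the interval $[1,\,(n_1+1)\,h_{n_1+1}(d-1)]$ (respectively $[1,\,(n_1+1)\,h_{n_1+1}(d)]$), and adapting the proofs of Propositions \ref{Prop1}, \ref{Prop2} to inhomogeneous tuples of osculating orders when exact realization requires mixed $k_j'$.
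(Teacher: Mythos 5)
Your main line of argument coincides with the paper's proof, which is exactly the one--liner you describe: quote \cite[Propositions 5.1, 5.10]{AMR17} for the $(n_1+1)$-osculating regularity and strong $2$-osculating regularity of $SV_{\textbf{\textit{n}}}^{\textbf{\textit{d}}}$, feed Proposition \ref{Prop1} (resp.\ Proposition \ref{Prop2}) into Theorem \ref{Non_Weakly_defec} with $l=n_1+1$, $s=N-1$ and $k_j=d-1$ (resp.\ $k_j=d$), and read off non $(h,N-1)$-tangential weak defectiveness, i.e.\ non $h$-weak defectiveness, for $h=(n_1+1)h_{n_1+1}(d-1)$ (resp.\ $(n_1+1)h_{n_1+1}(d)$). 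Up to this point your write-up is correct and is essentially the paper's argument.

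The problem is your final step, which is meant to cover the intermediate values of $h$. The claim that every integer in $[1,(n_1+1)h_{n_1+1}(d-1)]$ can be realized as $\sum_{j=1}^{l'}h_{n_1+1}(k_j')$ with $l'\leq n_1+1$ and $k_j'\leq d-1$ is false in general. Take $n_1=2$ and $d=4$ (e.g.\ $\textbf{\textit{n}}=(2,2)$, $\textbf{\textit{d}}=(4,4)$): the values $h_3(k)$ for $k\in\{1,2,3\}$ are $1,1,3$, so sums of at most three of them realize only $\{1,2,3,4,5,6,7,9\}$, and $h=8\leq 3\cdot h_3(3)=9$ is unreachable. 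So the "technical delicacy" you flag is a genuine obstruction to your route, not just bookkeeping. The fix is much simpler and is what the paper implicitly relies on: non $h$-weak defectiveness is monotone downward in $h$. Indeed a general hyperplane tangent at $h+1$ general points is a specialization of a general hyperplane tangent at $h$ of them, so by semicontinuity of the contact locus (the first part of Lemma \ref{lemma_sc}, or \cite{CC02}) $h$-weak defectiveness implies $(h+1)$-weak defectiveness; contrapositively, non-weak defectiveness at the extremal value $h=(n_1+1)h_{n_1+1}(d-1)$ (resp.\ $(n_1+1)h_{n_1+1}(d)$) descends to every smaller $h$. With that substitution your proof is complete; the parameter-varying construction can be discarded.
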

\begin{proof}
Since by \cite[Propositions 5.1, 5.10]{AMR17} the Segre-Veronese variety $SV_{\textbf{\textit{d}}}^{\textbf{\textit{n}}}$ has strong $2$-osculating regularity and $(n_1+1)$-osculating regularity, the statement follows immediately from Propositions \ref{Prop1}, \ref{Prop2} and Theorem \ref{Non_Weakly_defec}.
\end{proof}

\begin{Remark}\label{asy_wd}
Write $d=2^{\lambda_1}+2^{\lambda_2}\ldots+2^{\lambda_s}+\epsilon$ with $\lambda_1>\lambda_2>\ldots>\lambda_s\geq 1$ and $\epsilon\in\{0,1\}$, so that $\lambda_1=\lfloor \log_2(d)\rfloor$. The first part of Theorem \ref{Bound_non_Wd} says that $SV_{\textbf{\textit{d}}}^{\textbf{\textit{n}}}$ is not $h$-weakly defective for $h\leq (n_1+1)((n_1+1)^{\lambda_1-1}+(n_1+1)^{\lambda_2-1}+\cdots+(n_1+1)^{\lambda_s-1})$. 

Now, write $d+1=2^{\lambda_1}+2^{\lambda_2}\ldots+2^{\lambda_s}+\epsilon$ with $\lambda_1>\lambda_2>\ldots>\lambda_s\geq 1$ and $\epsilon\in\{0,1\}$, hence $\lambda_1=\lfloor \log_2(d+1)\rfloor$. The second part of Theorem \ref{Bound_non_Wd} yields that $SV_{\textbf{\textit{d}}}^{\textbf{\textit{n}}}$ is not $h$-weakly defective for $h\leq (n_1+1)((n_1+1)^{\lambda_1-1}+(n_1+1)^{\lambda_2-1}+\cdots+(n_1+1)^{\lambda_s-1})$. Therefore, we have that asymptotically for 
$$h\leq (n_1+1)^{\lfloor \log_2(d)\rfloor}$$
$SV_{\textbf{\textit{d}}}^{\textbf{\textit{n}}}$ is not $h$-weakly defective. 
\end{Remark}

\subsection{On $1$-weak defectiveness of Segre-Veronese varieties}
In this section we give condition ensuring that Segre-Veronese varieties are not $1$-weakly defective. Note that this yields that their dual varieties are hypersurfaces.

\begin{Proposition}\label{Prop0}
If $n_r\leq \sum_{i=1}^{r-1}n_i$ then $SV_{\textbf{\textit{d}}}^{\textbf{\textit{n}}}$ is not $1$-weakly defective. 
\end{Proposition}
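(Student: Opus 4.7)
The plan is to exhibit, for a general point $p \in SV_{\textbf{d}}^{\textbf{n}}$, a hyperplane $H$ containing $T_{p}SV_{\textbf{d}}^{\textbf{n}}$ whose intersection with $SV_{\textbf{d}}^{\textbf{n}}$ is singular only at $p$ in a neighborhood of $p$; by Lemma~\ref{lemma_sc} a general hyperplane through $T_{p}SV_{\textbf{d}}^{\textbf{n}}$ will then share this property, proving that $SV_{\textbf{d}}^{\textbf{n}}$ is not $1$-weakly defective. By the transitive action of $PGL(n_{1}+1)\times\cdots\times PGL(n_{r}+1)$ on $SV_{\textbf{d}}^{\textbf{n}}$, one may fix $p=e_{I_{0}}$ with $I_{0}=(\{0,\ldots,0\},\ldots,\{0,\ldots,0\})$; then $T_{p}=\langle e_{J}:d(I_{0},J)\leq 1\rangle$ by \cite[Proposition~2.5]{AMR17}, so hyperplanes through $T_{p}$ are cut out by equations of the form $\sum_{d(I_{0},J)\geq 2}\alpha_{J}X_{J}=0$.

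In the affine chart $X_{l,0}=1$ for all $l$, parametrize $SV_{\textbf{d}}^{\textbf{n}}$ by affine coordinates $(x_{l,j})_{l=1,\ldots,r;\,j=1,\ldots,n_{l}}$; the hyperplane pulls back to a polynomial $f(x)=0$ of order at least $2$ at the origin, and the singular locus of $\{f=0\}$ near $p$ is the common vanishing locus of $f$ and all partial derivatives $\partial_{x_{l,j}}f$. It therefore suffices to choose the coefficients $\alpha_{J}$ so that the quadratic part $f_{2}$ of $f$ is a non-degenerate quadratic form on $\C^{N}$, where $N=n_{1}+\cdots+n_{r}$: then $\nabla f(0)=0$ already forces $x=0$, so the origin is an isolated zero of the system $\{f=\nabla f=0\}$.

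The quadratic monomials appearing in $f$ correspond to indices $J$ with $d(I_{0},J)=2$, namely the cross-factor products $x_{l_{1},i}x_{l_{2},j}$ with $l_{1}\neq l_{2}$ (always available since each $d_{l}\geq 1$), and the same-factor products $x_{l,i}x_{l,j}$ whenever $d_{l}\geq 2$. Hence the symmetric $N\times N$ matrix $A$ representing $f_{2}$ is free to have arbitrary off-diagonal blocks $(l_{1},l_{2})$ and arbitrary diagonal blocks $(l,l)$ with $d_{l}\geq 2$, while the diagonal blocks $(l,l)$ with $d_{l}=1$ are constrained to vanish. The crucial claim, where the hypothesis $n_{r}\leq \sum_{i<r}n_{i}$ enters, is that such an $A$ can be chosen non-singular. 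In the pure Segre case (all $d_{l}=1$), this reduces to building a non-singular symmetric matrix with zero diagonal blocks of prescribed sizes; one can exhibit such a matrix explicitly by choosing a perfect matching in the complete multipartite graph $K_{n_{1},\ldots,n_{r}}$ when $N$ is even (such a matching exists exactly under our hypothesis), with a small modification (replacing one matched edge by a cross-group triangle on three vertices coming from three distinct factors) when $N$ is odd. If some $d_{l}\geq 2$, the extra freedom on the corresponding diagonal block makes the construction only easier, and in particular for $l<r$ the constraints $n_{l}\leq n_{r}\leq \sum_{i\ne l}n_{i}$ are automatic.

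The principal obstacle is this last combinatorial/linear-algebraic construction of a non-singular symmetric matrix under the block-zero constraints; once it is carried out, Lemma~\ref{lemma_sc} promotes the existence of a single good hyperplane to the assertion that a general hyperplane through $T_{p}SV_{\textbf{d}}^{\textbf{n}}$ has $p$ as an isolated singular point of its intersection with $SV_{\textbf{d}}^{\textbf{n}}$, yielding the claimed non $1$-weak defectiveness.
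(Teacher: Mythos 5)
Your proof is correct in substance but takes a genuinely different route from the paper's. The paper also reduces to hyperplanes of the form $\sum_{d(I_0,J)\geq 2}\alpha_J X_J=0$, but instead of analyzing the quadratic part it lifts a general tangent hyperplane section of the plain Segre variety $\p^{n_1}\times\cdots\times\p^{n_r}$ to the Segre--Veronese (replacing each Segre index $\{j_i\}$ by $\{0,\ldots,0,j_i\}$), so that in the affine chart the local equation of $H\cap SV_{\textbf{\textit{d}}}^{\textbf{\textit{n}}}$ is \emph{literally} the Segre one; the hypothesis $n_r\leq\sum_{i<r}n_i$ then enters only through the citation of \cite[Theorem 2.1]{Ott13}, which asserts that the Segre variety is not $1$-weakly defective exactly under this condition. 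You instead make that step self-contained: you exhibit a tangent hyperplane whose section has nondegenerate Hessian at $p$ (an ordinary double point), with the block-zero constraints on the Hessian coming from which distance-$2$ monomials exist, and the hypothesis entering via a perfect matching in $K_{n_1,\ldots,n_r}$. This buys independence from \cite{Ott13} --- in effect you reprove the relevant direction of the GKZ/Ottaviani criterion --- at the cost of the combinatorial construction, which is exactly the step you leave as a sketch. That step is true but needs one precision in the odd-$N$ case: the cross-factor triangle must be chosen to meet the largest part(s), since otherwise the residual multipartite graph can violate the matching condition $\max_i n_i'\leq (N-3)/2$ (e.g.\ for part sizes $(1,(N-1)/2,(N-1)/2)$ the triangle must take one vertex from each factor). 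This is always achievable because $N$ odd together with $n_r\leq\sum_{i<r}n_i$ and $d_l=1$ for all $l$ forces $r\geq 3$; and when some $d_l\geq 2$ the free diagonal block indeed only helps. With that detail supplied your argument is complete, and the final passage from your special hyperplane to the general one via Lemma \ref{lemma_sc} is the same as in the paper.
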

\begin{proof}
First of all, let us consider the Segre embedding of $\p^{n_1}\times\cdots\times\p^{n_r}$, that is $\textbf{\textit{d}} =(1,\dots,1)$. Let $p\in\p^{n_1}\times\cdots\times\p^{n_r}$ be a general point, without loss of generality we may assume that $p=e_{0,\ldots,0}$. Hence $T_p(\p^{n_1}\times\cdots\times\p^{n_r})=\langle e_J\:|\:d(J,(\{0\},\ldots,\{0\}))\leq 1\rangle$. Thus, a general hyperplane containing  $T_{p}(\p^{n_1}\times\cdots\times\p^{n_r})$ is given by an equation of type
$$\sum_{J\in\Lambda\:|\:d(J,(\{0\},\ldots,\{0\}))\geq 2}\alpha_{J}X_J=0$$
where $\Lambda$ is the set of indexes of the standard Segre variety. On the affine chart $X_{1,0}=\cdots=X_{r,0}=1$, where $X_{i,0},\ldots,X_{i,{n_i}}$ are homogeneous coordinates of $\p^{n_i}$, we have that $H\cap(\p^{n_1}\times\cdots\times\p^{n_r})$ is the hypersurface in $\C^{\sum n_i}$ given by
\stepcounter{thm}
\begin{equation}\label{Hypersurface}
\sum_{J=(\{j_1\},\ldots,\{j_r\})\in\Lambda\:|\:d(J,(\{0\},\ldots,\{0\}))\geq 2}\alpha_{J}X_{1,{j_1}}\cdots X_{r,{j_r}}=0
\end{equation}
where in the above formula whenever some of the variables $X_{1,0},\dots,X_{r,0}$ appear we set them equal to one. Note that for a general choice of the $\alpha_{J}$ the hypersurface defined by \ref{Hypersurface} has $0$-dimensional singular locus, since by \cite[Theorem 2.1]{Ott13} the Segre variety $\p^{n_1}\times\cdots\times\p^{n_r}$ is not $1$-weakly defective.

From now on $\Lambda$ will be the set of indexes of a Segre-Veronese variety. Let $p\in SV_{\textbf{\textit{d}}}^{\textbf{\textit{n}}}$. As before without loss of generality we can assume that $p=e_{I_0}$. By \cite[Proposition 2.5]{AMR17} $T_{p}SV_{\textbf{\textit{d}}}^{\textbf{\textit{n}}}=\langle e_J\:|\:d(I_0,J)\leq 1\rangle$.
Observe that for each $J=(\{j_1\},\ldots,\{j_r\})$ such that $d(J,(\{0\},\ldots,\{0\}))\geq 2$ we can consider $J'=(J_1,\ldots,J_r)\in\Lambda$ where $J_i=\{0,\ldots,0,j_i\}$. Therefore, considering the hyperplane $H$ given by 
$$\sum_{J'}\alpha_{J}X_{J'}=0$$
where we set $X_{1,0}=\cdots=X_{r,0}=1$ whenever these variables appear in the expression above, we see that in the affine chart $X_{1,0}=\cdots=X_{r,0}=1$ the hypersurface $H\cap SV_{\textbf{\textit{d}}}^{\textbf{\textit{n}}}$ in $\C^{\sum n_i}$ is given by (\ref{Hypersurface}). Thus, the statement follows from the first part of the proof.
\end{proof}

\begin{Proposition}\label{Prop1bis}
Assume that $n_r> \sum_{i=1}^{r-1}n_i$.
\begin{itemize}
\item[-] If $d_r\geq 2$ then $SV_{\textbf{\textit{d}}}^{\textbf{\textit{n}}}$ is not $(n_1+1)$-weakly defective.
\item[-] If $d_r=1$ then $SV_{\textbf{\textit{d}}}^{\textbf{\textit{n}}}$ is $1$-weakly defective.
\end{itemize} 
\end{Proposition}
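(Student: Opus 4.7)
\medskip
\noindent\textbf{Plan.} The two bullets demand rather different techniques. Item 1 (non-$(n_1+1)$-weak-defectiveness) is a direct invocation of the osculating-regularity machinery of Section \ref{or-wd}; item 2 ($1$-weak-defectiveness) requires an explicit identification of a positive-dimensional singular locus on a generic tangent-hyperplane section.

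\smallskip

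For the first bullet, assume (as in the main case) that $d:=\min\{d_i\}\geq 2$, which is compatible with the hypothesis $d_r\geq 2$. Apply the first bullet of Theorem \ref{Bound_non_Wd} with $h=n_1+1$: since $h_{n_1+1}(d-1)\geq h_{n_1+1}(1)=1$, the threshold $(n_1+1)h_{n_1+1}(d-1)\geq n_1+1$ is attained, yielding the desired non-$(n_1+1)$-weak-defectiveness. Conceptually, this reflects the degeneration scheme of Theorem \ref{Non_Weakly_defec}: $(n_1+1)$-osculating regularity allows the $(n_1+1)$ tangent spaces $T_{p_0}SV_{\textbf{\textit{d}}}^{\textbf{\textit{n}}},\dots,T_{p_{n_1}}SV_{\textbf{\textit{d}}}^{\textbf{\textit{n}}}$ to be specialized to $(n_1+1)$ osculating spaces of order $d-1$, after which Proposition \ref{Prop1} supplies the required non-tangential hyperplane.

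\smallskip

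For the second bullet, fix the point $p=e_{I_0}$ with $I_0=(\{0\}^{d_1},\dots,\{0\}^{d_{r-1}},\{0\})$. By \cite[Proposition 2.5]{AMR17}, a general hyperplane $H\supset T_p SV_{\textbf{\textit{d}}}^{\textbf{\textit{n}}}$ has equation $\sum_{J\,:\,d(I_0,J)\geq 2}\alpha_J X_J=0$. In the affine chart $X_{i,0}=1$, and after splitting monomials according to whether $J_r=\{0\}$ or $J_r=\{j\}$ with $j\geq 1$, the equation takes the form
$$F(\mathbf{x},X_{r,1},\dots,X_{r,n_r})\;=\;G_0(\mathbf{x})\;+\;\sum_{j=1}^{n_r} X_{r,j}\,G_j(\mathbf{x}),$$
where $\mathbf{x}=(X_{i,k})_{i<r,\,k\geq 1}$, $\deg G_0\geq 2$ (the block $J_r=\{0\}$ forces $d(I_0|_{<r},J|_{<r})\geq 2$, i.e.\ at least two nontrivial factors), and $\deg G_j\geq 1$ (the block $J_r=\{j\}$ forces $d(I_0|_{<r},J|_{<r})\geq 1$). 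Consequently $G_0(0)=G_j(0)=0$ and $\partial G_0/\partial X_{i,k}|_0=0$. Restricting to the $n_r$-dimensional affine subspace $L=\{\mathbf{x}=0\}$, both $F|_L$ and $\partial F/\partial X_{r,j}|_L$ vanish identically, while the remaining singular-locus equations $\partial F/\partial X_{i,k}=0$ collapse to the linear system
$$\sum_{j=1}^{n_r} X_{r,j}\,\frac{\partial G_j}{\partial X_{i,k}}\bigg|_0\;=\;0, \qquad i<r,\ k=1,\dots,n_i,$$
of $\sum_{i<r} n_i$ equations in the $n_r$ unknowns $X_{r,1},\dots,X_{r,n_r}$.

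\smallskip

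The last step is a genericity check: for general $\alpha$, each coefficient $\partial G_j/\partial X_{i,k}|_0$ equals (up to a nonzero multiplicity) the independent parameter $\alpha_J$ with $J_i=\{0^{d_i-1},k\}$, $J_l=\{0\}^{d_l}$ for $l\neq i,\,l<r$, and $J_r=\{j\}$. These indices are pairwise distinct as the triple $(i,k,j)$ varies, so the matrix of the system is an $\bigl(\sum_{i<r}n_i\bigr)\times n_r$ matrix of independent generic entries and thus has maximal rank $\sum_{i<r}n_i$. By the hypothesis $n_r>\sum_{i<r}n_i$, the solution space has dimension $n_r-\sum_{i<r}n_i\geq 1$, giving a positive-dimensional linear subvariety of $\Sing(H\cap SV_{\textbf{\textit{d}}}^{\textbf{\textit{n}}})$ through $p$. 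The main obstacle is precisely this independence/rank verification, which reduces to a careful bookkeeping of the correspondence between the relevant ``mixed partials'' of $F$ and the free parameters $\alpha_J$.
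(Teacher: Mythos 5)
Your proof of the second bullet is correct and is essentially the paper's own argument: the paper likewise writes $\widetilde F$ in the affine chart, observes that $d_r=1$ forces every monomial either to have order $\geq 2$ in the variables of the first $r-1$ factors or to be of the form $X_{r,j}\cdot(\text{order}\geq 1)$, restricts to $\{\mathbf{x}=0\}$, and exhibits the solution set of the resulting linear system of $\sum_{i<r}n_i$ equations in the $n_r$ unknowns $X_{r,1},\dots,X_{r,n_r}$ as a positive-dimensional linear space through $p$ inside $\Sing(H\cap SV_{\textbf{\textit{d}}}^{\textbf{\textit{n}}})$. (The maximal-rank verification you flag as the main obstacle is not actually needed: the solution space has dimension at least $n_r-\sum_{i<r}n_i>0$ whatever the rank is, which is all the claim requires.)

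The first bullet, however, has a genuine gap. You silently replace the hypothesis $d_r\geq 2$ by the strictly stronger assumption $\min\{d_1,\dots,d_r\}\geq 2$. These are not equivalent: the $d_i$ are not assumed ordered, so e.g.\ $\textbf{\textit{n}}=(1,n_2)$, $\textbf{\textit{d}}=(1,2)$ with $n_2>1$ satisfies the hypotheses of the first bullet but has $d=\min\{d_i\}=1$. In that case your invocation of Theorem \ref{Bound_non_Wd} is vacuous, since $h_{n_1+1}(d-1)=h_{n_1+1}(0)=0$ gives the bound $h\leq 0$, and Proposition \ref{Prop1} (which requires $\min\{d_i\}\geq 2$) is not available either. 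This missing case is exactly the one needed for Theorem \ref{th1wd}: to conclude that $d_r\geq 2$ forces non-$1$-weak-defectiveness one must cover Segre--Veronese varieties with some linear factors. The paper therefore does not reduce to Section \ref{or-wd} at all; it gives a direct construction of a hyperplane containing $\langle T_{p_0},\dots,T_{p_{n_1}}\rangle$ cut out by monomials of two types, bilinear ones $X_{i,j}X_{r,j+1+\sum_{l<i}n_l}$ pairing each variable of the $i$-th factor ($i<r$) with a distinct variable of the last factor (this is where $n_r>\sum_{i<r}n_i$ is used, to have enough coordinates on $\mathbb{P}^{n_r}$), together with pure powers $X_{r,j}^{d_r}$ for the remaining last-factor variables; the Jacobian then vanishes only at the origin precisely because $d_r\geq 2$, with no condition on the other $d_i$. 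Your argument would need to be supplemented by this (or a similar) explicit construction to cover the case where some $d_i=1$ for $i<r$.
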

\begin{proof}
Let $p_0,\ldots,p_{n_1}\in SV_{\textbf{\textit{d}}}^{\textbf{\textit{n}}}$ be general points. Without loss of generality, we can suppose that $p_i=e_{I_i}$. By \cite[Proposition 2.5]{AMR17} $T_{e_{I_i}}SV_{\textbf{\textit{d}}}^{\textbf{\textit{n}}}=\langle e_J\:|\:d(I_i,J)\leq 1\rangle$, and hence
$$\begin{array}{ccl}
T=\langle T_{e_{I_0}}^{1},\ldots,T_{e_{I_{n_1}}}^{1}\rangle&=&\langle e_J\:|\:d(I_i,J)\leq 1\:\:\text{ for some }\:i=0,\ldots n_1\rangle\\
&=&\{X_J=0\:|\:d(I_i,J)>1\:\text{ for all }\:i=0,\ldots n_1\}
\end{array}$$

Now, let $H\subset\p^{N(\textbf{\textit{n}},\textbf{\textit{d}})}$ be a general hyperplane containing $\langle T_{p_0}^{1},\ldots,T_{p_{n_1}}^{1}\rangle$. Then $H$ is given by  an equation of type
$$
\sum_{J\in \Lambda\:|\:d(I_i,J)>1, \forall\: i = 0,\dots,n_1}\alpha_J X_J=0,\:\:\alpha_J\in \C
$$
Let us denote by $\p^{N(\textbf{\textit{n}},\textbf{\textit{d}})-\dim(T)-1}$ the projective space whose homogeneous coordinates are the $\alpha_J$ with $J\in \Lambda$ and $d(I_i,J)>d$ for all $i=0,\dots,n_1$.

To prove the first claim let us fix $l\in\{0,\dots,n_1\}$. We will discuss in detail the case $l = 0$, the argument for the remaining values of $l$ is analogous. 

Let us consider the subset $\Lambda'\subset\Lambda$ given by the set of indexes $J'=(J_1,\ldots,J_r)$ where for each pair $i,j$ with $i\in\{1,\ldots,r-1\}$ and $1\leq j\leq n_i$ we set
$$J_i=\{0,\ldots,0,j\},\:J_r=\left\lbrace0,\ldots,0,1+j+\sum_{l<i}n_l\right\rbrace\:\text{ and }J_k=\{0,\ldots,0\} \:\text{ for }\:k\neq i,r$$
Furthermore, consider the subset $\Lambda''\subset\Lambda$ given by the set of indexes $J''=J_j=(J_1,\ldots,J_r)$ such that 
$$J_r=\{j,\ldots,j\},\:\text{ and }J_k=\{0,\ldots,0\} \:\text{ for }\:k\neq r$$
for each $2+\sum_{l\leq r-1}n_l\leq j\leq n_r$ and $j=1$.

Since $1\leq j<1+j+\sum_{l<i}n_l$, each $J\in\Lambda_0=\Lambda'\cup\Lambda''$ satisfies $d(I_i,J)>1$ for all $i=0,\ldots,n_1$. Thus, we have a natural projection
$$\begin{array}{ccccl}
\pi_l&:&\p^{N(\textbf{\textit{n}},\textbf{\textit{d}})-\dim(T)-1}&\dasharrow&\p^{n_r}\\
&&(\alpha_J)_{J\in\Lambda\:|\:d(I_i,J)>1\:i=0,\ldots,n_1}&\longmapsto&(\alpha_J)_{J\in\Lambda_0}
\end{array}$$

Now, consider the point $[1:\cdots:1]\in\p^{n_r}$ and let $H\in\pi_l^{-1}([1:\cdots:1])$ be the hyperplane given by
$$
\sum_{J\in\Lambda_0}X_J=0
$$
In the affine chart $X_{1,0}=\cdots=X_{r,0}=1$, where for each $i\in\{1,\ldots,r\}$, $X_{i,0},\ldots,X_{i,{n_i}}$ are the homogeneous coordinates on $\p^{n_i}$, we have that $H\cap SV_{\textbf{\textit{d}}}^{\textbf{\textit{n}}}$ is the hypersurface in $\C^{\sum n_i}$ given by
$$\sum_{\substack{1\leq i\leq r-1\\1\leq j\leq n_i}}X_{i,j}X_{r,{j+1+\sum_{l<i}n_l}}+\sum_{2+\sum_{l\leq r-1}n_l\leq j\leq n_r}X_{r,j}^{d_r}+X_{r,1}^{d_r}=0$$
Looking at the system of the partial derivatives we see that this hypersurface is singular only at $(0,\dots,0)$. Therefore, using Lemma \ref{lemma_sc} we prove the first claim.
For the second part, let us consider a general hyperplane $H$ that contains $T_{e_{I_0}}SV_{\textbf{\textit{d}}}^{\textbf{\textit{n}}}$. Hence, $H$ is the zero locus of a polynomial $F$ of the form
$$
F=\sum_{J\in\Lambda\:|\:d(J,I_0)\geq 2}\alpha_JX_J,\:\:\alpha_J\in \C
$$
In the affine chart $X_{1,0}=\cdots=X_{r,0}=1$ the intersection $H\cap SV_{\textbf{\textit{d}}}^{\textbf{\textit{n}}}$ is the hypersurface in $\C^{\sum n_i}$ given by
$$\widetilde{F}=\sum_{J=(J_1,\ldots,J_{r-1},\{j\})\in\Lambda\:|\:d(J,I_0)\geq 2}\alpha_J X_{1,{J_1}}\cdots X_{r,j}=0$$
where with $X_{1,{J_k}}$ we denote the product of powers of the homogeneous coordinates on $\mathbb{P}^{n_l}$ with exponents given by the $J_k$. Observe that for each $1\leq i\leq r-1$ and $1\leq j\leq n_i$ we have $$\frac{\partial \widetilde{F}}{\partial X_{i,j}}=(\sum_{k=1}^{n_r}\alpha_{i,j}^{k}X_{r,k}+G_{k}(X_{1,1},\ldots,X_{r-1,n_{r-1}})X_{r,k})+G(X_{1,1},\ldots,X_{r-1,n_{r-1}})$$ and for each $1\leq k\leq n_r$ we have $$\frac{\partial \widetilde{F}}{\partial X_{r,k}}=G'(X_{1,1},\ldots,X_{r-1,n_{r-1}})$$ with $G_{k}(X_{1,1},\ldots,X_{r-1,n_{r-1}})$, $G(X_{1,1},\ldots,X_{r-1,n_{r-1}})$ and $G'(X_{1,1},\ldots,X_{r-1,n_{r-1}})$ polynomials with no constant terms since by assumption $d_r=1$.

Now, note that the locus given by $X_{1,1}=X_{1,2}=\cdots=X_{r-1,n_{r-1}-1}=X_{r-1,n_{r-1}}=0$ and
$$\sum_{k=1}^{n_r}\alpha_{1,1}^{k}X_{r,k}=\sum_{k=1}^{n_r}\alpha_{1,2}^{k}X_{r,k} = \dots = \sum_{k=1}^{n_r}\alpha_{r-1,r-1}^{k}X_{r,k}=0$$
is contained in the singular locus of $\{\widetilde{F}=0\}$. Therefore, we get a linear system in $n_r$ variables and $\sum_{i=1}^{r-1}n_i$ equations. Since $n_r>\sum_{i=1}^{r-1}n_i$ we conclude that the singular locus of $H\cap SV_{\textbf{\textit{d}}}^{\textbf{\textit{n}}}$ contains at least a linear space of dimension $n_r-\sum_{i=1}^{r-1}n_i>0$ yielding that $SV_{\textbf{\textit{d}}}^{\textbf{\textit{n}}}$ is $1$-weakly defective.
\end{proof}

By Proposition \ref{Prop1bis} we have that $SV_{\textbf{\textit{d}}}^{\textbf{\textit{n}}}$ with $\textbf{\textit{n}}=(1,n)$ and $\textbf{\textit{d}}=(d,1)$ is $1$-weakly defective. Now, we determine the smallest dimension of a linear subspace tangent to $SV_{\textbf{\textit{d}}}^{\textbf{\textit{n}}}$ along a positive dimensional subvariety. 

\begin{Proposition}\label{Prop3bis}
Let $SV_{\textbf{\textit{d}}}^{\textbf{\textit{n}}}$ with $\textbf{\textit{n}}=(1,n)$ and $\textbf{\textit{d}}=(d,1)$. Then $SV_{\textbf{\textit{d}}}^{\textbf{\textit{n}}}$ is not $(1,s)$-tangentially weakly defective if and only if $s\leq d(n+1)$.
\end{Proposition}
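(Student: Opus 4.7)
The plan is to translate the condition $T_qSV^{\textbf{\textit{n}}}_{\textbf{\textit{d}}}\subset\Pi$ into a vanishing condition for sections of bidegree $(d-1,1)$ on $\p^1\times\p^n$, in the spirit of the second part of Proposition \ref{Prop1bis}. Using the $PGL_2\times PGL_{n+1}$-action I may assume $p=e_{I_0}$ with $I_0=(\{0,\ldots,0\},\{0\})$; by \cite[Proposition 2.5]{AMR17} $T_pSV^{\textbf{\textit{n}}}_{\textbf{\textit{d}}}=\langle e_J:d(I_0,J)\leq 1\rangle$. In bihomogeneous coordinates $([s:t],[y_0:\cdots:y_n])$ a direct check in the affine chart around $p$ shows that multiplication by $t$ defines an isomorphism $I_p(\O(d-1,1))\xrightarrow{\sim} I_{2p}(\O(d,1))$ of vector spaces of dimension $d(n+1)-1$. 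Therefore a linear subspace $\Pi\supset T_pSV^{\textbf{\textit{n}}}_{\textbf{\textit{d}}}$ of codimension $c=N(\textbf{\textit{n}},\textbf{\textit{d}})-s$ is cut out by a $c$-dimensional subspace $W\subset I_p(\O(d-1,1))$ via $\Pi=\bigcap_{h\in W}\{th=0\}$, and a partial derivative computation on $f=th$ shows that $T_qSV^{\textbf{\textit{n}}}_{\textbf{\textit{d}}}\subset \Pi$ happens in exactly two ways: (i) $q\notin\{t=0\}$ and every $h\in W$ vanishes to order $2$ at $q$, or (ii) $q\in\{t=0\}\cong\p^n$ and every $h\in W$ vanishes at $q$.

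Next I would exploit the restriction $\rho:I_p(\O(d-1,1))\to H^0(\O_{\p^n}(1))$, $h\mapsto h|_{\{t=0\}}$, which surjects onto the $n$-dimensional space of linear forms on $\p^n$ vanishing at the second factor $p'$ of $p$. For the forward direction $s>d(n+1)$, i.e.\ $c\leq n-1$, the image $\rho(W)$ has dimension at most $c<n$, so its common zero locus in $\p^n$ is a linear space of dimension at least $n-c\geq 1$ through $p'$; by (ii) this is a positive-dimensional component of the contact locus through $p$ inside $\{t=0\}$, witnessing $(1,s)$-tangential weak defectiveness. For the converse, since $(1,s)$-tangential weak defectiveness propagates from smaller to larger $s$, it suffices to treat the boundary $s=d(n+1)$, i.e.\ $c=n$. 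For a general $W\in \mathrm{Grass}(n,I_p(\O(d-1,1)))$ the restriction $\rho|_W$ is injective (the kernel $\ker\rho\subset I_p$ has dimension $(d-1)(n+1)$ and a straightforward dimension count gives $W\cap\ker\rho=0$ generically), so $\rho(W)$ is the full $n$-dimensional image and by (ii) the $\{t=0\}$ part of the contact locus reduces to $\{p\}$. For the $\{t\neq 0\}$ part I would consider the incidence
$$
\mathcal{Z}=\{(q,W)\in (SV^{\textbf{\textit{n}}}_{\textbf{\textit{d}}}\setminus\{t=0\})\times \mathrm{Grass}(n,I_p(\O(d-1,1))):W\subset I_{2q}(\O(d-1,1))\}.
$$
For $q\neq p$ generic the subspace $I_p\cap I_{2q}(\O(d-1,1))$ has codimension $n+3$ in $H^0(\O(d-1,1))$, so $\dim\mathcal{Z}=(n+1)+n(d(n+1)-2n-3)$, which is strictly less than $\dim \mathrm{Grass}(n,I_p)=n(d(n+1)-1-n)$ (their difference equals $n^2+n-1\geq 1$ for $n\geq 1$). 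Hence the projection $\mathcal{Z}\to \mathrm{Grass}(n,I_p)$ is not dominant, and a general $W$ admits no $q\in\{t\neq 0\}$ with $W\subset I_{2q}$. Combining (i) and (ii), the contact locus of a general $\Pi$ of dimension $d(n+1)$ equals $\{p\}$, so $SV^{\textbf{\textit{n}}}_{\textbf{\textit{d}}}$ is not $(1,d(n+1))$-tangentially weakly defective.

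The main technical content is the partial derivative computation producing the dichotomy (i)/(ii) and the incidence dimension count showing the $\{t\neq 0\}$ part is empty for generic $W$; the monotonicity step and the surjectivity of $\rho$ are routine.
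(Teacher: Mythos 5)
Your proposal is correct in substance and its first half coincides with the paper's argument, while the converse direction is handled by a genuinely different mechanism. The identification of hyperplanes through $T_pSV_{\textbf{\textit{d}}}^{\textbf{\textit{n}}}$ with $t\cdot I_p(\mathcal{O}(d-1,1))$ is exactly what the paper's explicit equations encode (in the affine chart every $F_k$ is divisible by $X_1$), and your dichotomy (i)/(ii) together with the restriction map $\rho$ reproduces, in intrinsic language, the paper's observation that the singular locus of $\Pi\cap SV_{\textbf{\textit{d}}}^{\textbf{\textit{n}}}$ contains $\{X_1=0,\ \sum_j\alpha_{1,j}^kY_j=0\}$, a linear space of dimension $s-d(n+1)$; so the forward implication is the same computation. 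Where you diverge is the bound $s\leq d(n+1)$: the paper specializes $\Pi$ to a subspace cut out by forms $\sum_j\alpha_{1,j}^kX_1Y_j$, observes that the resulting complete intersection splits as $\{X_1=0\}$ union a linear space and has zero-dimensional contact locus at $p$, and then invokes the semicontinuity statement of Lemma \ref{lemma_sc} to transfer this to a general $\Pi$; you instead argue directly for a general $W$ via injectivity of $\rho|_W$ (disposing of the $\{t=0\}$ part) and a non-dominance count for the incidence variety $\mathcal{Z}$ (disposing of the $\{t\neq 0\}$ part). Your route avoids the specialization lemma altogether, at the price of a projective-geometry dimension count; the paper's route is more elementary but leans on Lemma \ref{lemma_sc}.

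One step of your converse needs shoring up: you compute $\dim\mathcal{Z}$ from the fiber dimension over a \emph{generic} $q$, but $\mathcal{Z}$ could a priori have components dominating the locus of special $q$ where $I_p\cap I_{2q}(\mathcal{O}(d-1,1))$ has codimension only $n+2$ (namely those $q$ with $p$ in the tangent space of the $\mathcal{O}(d-1,1)$-embedding at $q$). Since that locus of $q$ has dimension at most $n$ and the fiber dimension jumps by at most $n\cdot 1=n$ there, the total still falls short of $\dim\operatorname{Grass}(n,I_p)$ by at least $n^2>0$, so the conclusion stands; but as written the count is incomplete and you should either stratify by the rank of the evaluation map or note explicitly that all fibers of $\mathcal{Z}\to SV_{\textbf{\textit{d}}}^{\textbf{\textit{n}}}\setminus\{t=0\}$ are controlled. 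The monotonicity reduction to $s=d(n+1)$ and the surjectivity of $\rho$ are fine.
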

\begin{proof} 
Let $p\in SV_{\textbf{\textit{d}}}^{\textbf{\textit{n}}}$ be a general point, without loss the generality we can suppose that $p=e_{\{0,\ldots,0\},\{0\}}$. Then we have $T_pSV_{\textbf{\textit{d}}}^{\textbf{\textit{n}}}=\langle e_J\:|\:d(J,(\{0,\ldots,0\},\{0\}))\leq 1\rangle$.

Now, let $\Pi\subset \p^{dn+d+n}$ be a general linear subspace of dimension $s$ such that $T_pSV_{\textbf{\textit{d}}}^{\textbf{\textit{n}}}\subset \Pi$. Therefore, we may write $\Pi =\bigcap_{i=1,\dots,dn+d+n-s}H_{i}$, where the $H_{i}$ are general hyperplanes tangent to $SV_{\textbf{\textit{d}}}^{\textbf{\textit{n}}}$ at $p$. We have that $\Pi\cap SV_{\textbf{\textit{d}}}^{\textbf{\textit{n}}}$ is given by
$$
\begin{small}
\left\{\begin{array}{l}
F_{1}=\sum_{\substack{1\leq  i\leq d\\1\leq j\leq n}}\alpha_{i,j}^{1}X_{0}^{d-i}X_{1}^{i}Y_{j}+\sum_{2\leq i\leq d}\alpha_{i,0}^{1}X_{0}^{d-i}X_{1}^{i}Y_{0}=0\\
\vdots \\
F_{dn+d+n-s} = \sum_{\substack{1\leq  i\leq d\\1\leq j\leq n}}\alpha_{i,j}^{dn+d+n-s}X_{0}^{d-i}X_{1}^{i}Y_{j}+\sum_{2\leq i\leq d}\alpha_{i,0}^{dn+d+n-s}X_{0}^{d-i}X_{1}^{i}Y_{0}=0
\end{array}\right.
\end{small}
$$
and working on the affine chart $X_{0}=Y_{0}=1$ we reduce to
\stepcounter{thm}
\begin{equation}\label{syschart}
\left\{\begin{array}{l}
F_{1}=\sum_{\substack{1\leq  i\leq d\\1\leq j\leq n}}\alpha_{i,j}^{1}X_{1}^{i}Y_{j}+\sum_{2\leq i\leq d}\alpha_{i,0}^{1}X_{1}^{i}=0\\
\vdots \\
F_{dn+d+n-s} = \sum_{\substack{1\leq  i\leq d\\1\leq j\leq n}}\alpha_{i,j}^{dn+d+n-s}X_{1}^{i}Y_{j}+\sum_{2\leq i\leq d}\alpha_{i,0}^{dn+d+n-s}X_{1}^{i}=0
\end{array}\right.
\end{equation}
Then, $\Sing(H_{1}\cap \dots \cap H_{dn+d+n-s}\cap SV_{\textbf{\textit{d}}}^{\textbf{\textit{n}}})$ contains the variety cut out by the following equations
\stepcounter{thm}
\begin{equation}\label{singloc}
\left\{\begin{array}{l}
\sum_{\substack{1\leq j\leq n}}\alpha_{1,j}^{1}Y_{j}=0\\
\vdots \\
\sum_{\substack{1\leq j\leq n}}\alpha_{1,j}^{dn+d+n-s}Y_{j}=0\\
X_{1}=0
\end{array}\right.
\end{equation}
and, for a general choice of the $\alpha_{i,j}^{k}$ we have that this is a linear space in the hyperplane $X_{1}=0$ of dimension $s-d(n+1)$.  

Now, consider a special linear space $\Pi$ such that (\ref{syschart}) takes the following form
$$
\left\{\begin{array}{l}
F_{1}=\sum_{\substack{1\leq j\leq n}}\alpha_{1,j}^{1}X_{1}Y_{j} = 0\\
\vdots \\
F_{dn+d+n-s} = \sum_{\substack{1\leq j\leq n}}\alpha_{1,j}^{dn+d+n-s}X_{1}Y_{j}= 0
\end{array}\right.
$$
Then $\{F_{1} = \dots = F_{dn+d+n-s} = 0\}$ splits as 
$$\{X_1=0\}\cup \{\sum_{\substack{1\leq j\leq n}}\alpha_{1,j}^{1}Y_{j} = \dots = \sum_{\substack{1\leq j\leq n}}\alpha_{1,j}^{dn+d+n-s}Y_{j} = 0\}$$ 
and its singular locus is exactly given by (\ref{singloc}). Now, Lemma \ref{lemma_sc} yields that a general linear space of dimension $s$ containing $T_pSV_{\textbf{\textit{d}}}^{\textbf{\textit{n}}}$ has contact locus of dimension at most $s-d(n+1)$. Hence, $SV_{\textbf{\textit{d}}}^{\textbf{\textit{n}}}$ is not $(1,s)$-tangentially weakly defective for $s\leq d(n+1)$.
\end{proof}

Following the line of proof of Proposition \ref{Prop3bis} we can prove the following result on $(1,s)$-tangential weak defectiveness.

\begin{Proposition}\label{Prop4bis}
Consider $SV_{\textbf{\textit{d}}}^{\textbf{\textit{n}}}$ with $\textbf{\textit{n}}=(n_{1},\dots,n_{r})$ and $\textbf{\textit{d}}=(d_{1},\dots,d_{r-1},1)$, and assume that $n_{r}> \sum_{i=1}^{r-1} n_{i}$. If 
$$s\leq \prod_{i=2}^r{\binom{n_i+d_i}{n_i}}-n_{r}\sum_{i=1}^{r-1}n_{i}$$ 
then $SV_{\textbf{\textit{d}}}^{\textbf{\textit{n}}}$ is not $(1,s)$-tangentially weakly defective.
\end{Proposition}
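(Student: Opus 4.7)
The plan is to follow the template of Proposition \ref{Prop3bis}, adapted to the multi-factor setting. Set $p=e_{I_0}$ with $I_0=((\{0,\ldots,0\}),\ldots,(\{0,\ldots,0\}),\{0\})$, so that $T_pSV_{\textbf{\textit{d}}}^{\textbf{\textit{n}}}=\langle e_J\::\:d(I_0,J)\leq 1\rangle$ by \cite[Proposition 2.5]{AMR17}. Let $\Pi$ be a general $s$-dimensional linear subspace containing $T_pSV_{\textbf{\textit{d}}}^{\textbf{\textit{n}}}$, written as $\Pi=\bigcap_{k=1}^cH_k$ with $c=N(\textbf{\textit{n}},\textbf{\textit{d}})-s$, each $H_k$ a general tangent hyperplane at $p$. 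Since $d_r=1$, in the affine chart $X_{i,0}=1$ the defining equation of each $H_k$ splits, according to whether $J_r=\{0\}$ or $J_r=\{j\}$ for some $j\geq 1$, as
$$F_k=G_k(\mathbf{x})+\sum_{j=1}^{n_r}X_{r,j}\,H_{k,j}(\mathbf{x}),$$
where $\mathbf{x}=(X_{i,k'})_{i<r,\,k'\geq 1}$, $\deg G_k\geq 2$ and $\deg H_{k,j}\geq 1$.

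The subspace $L=\{\mathbf{x}=0\}$ of dimension $n_r$, with coordinates $\mathbf{y}=(X_{r,1},\ldots,X_{r,n_r})$, is contained in every tangent hyperplane and hence in $\Pi\cap SV_{\textbf{\textit{d}}}^{\textbf{\textit{n}}}$. On $L$ one has $\partial F_k/\partial X_{r,j}\equiv 0$, while $\partial F_k/\partial X_{i,k'}|_L=\sum_{j=1}^{n_r} h_{k,j,i,k'}\,y_j$, with $h_{k,j,i,k'}$ the coefficient of $X_{i,k'}$ in $H_{k,j}$. Consequently, the singular locus of $\Pi\cap SV_{\textbf{\textit{d}}}^{\textbf{\textit{n}}}$ inside $L$ contains the linear subvariety cut out by $\sum_{j=1}^{n_r} h_{k,j,i,k'}\,y_j=0$ for $1\leq k\leq c$, $1\leq i\leq r-1$ and $1\leq k'\leq n_i$.

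The matching upper bound is obtained, as in Proposition \ref{Prop3bis}, by degenerating $\Pi$ to a specific $\Pi_0$ for which $\Pi_0\cap SV_{\textbf{\textit{d}}}^{\textbf{\textit{n}}}$ decomposes as a union of linear subspaces. The idea is to concentrate each $F_k$ on the pure bilinear monomials $X_{i,k'}X_{r,j}$ as long as possible, bringing in higher-order tangent monomials (either pure in $\mathbf{x}$, or of the form $X_{r,j}M(\mathbf{x})$ with $\deg M\geq 2$) only to make up the remaining codimension once the $n_r\sum_{i<r}n_i$-dimensional bilinear span is exhausted. A computation paralleling the one in Proposition \ref{Prop3bis} then shows that the singular locus of $\Pi_0\cap SV_{\textbf{\textit{d}}}^{\textbf{\textit{n}}}$ has dimension at most $s+n_r\sum_{i<r}n_i-M'$, where $M'=\prod_{i=2}^{r}\binom{n_i+d_i}{n_i}$. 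Applying Lemma \ref{lemma_sc} propagates this as an upper bound to a general $\Pi$ of dimension $s$; under the hypothesis $s\leq M'-n_r\sum_{i<r}n_i$ this quantity is $\leq 0$, so the contact locus through $p$ is zero-dimensional and $SV_{\textbf{\textit{d}}}^{\textbf{\textit{n}}}$ is not $(1,s)$-tangentially weakly defective.

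The main obstacle is the explicit construction of $\Pi_0$ together with the combinatorial verification that its factored defining equations cut out a scheme with singular locus of at most the claimed dimension, which requires careful bookkeeping of which tangent monomials to include to realize codimension $c$ while preserving the factored structure. Once $\Pi_0$ is in place, the Jacobian analysis on $L$ and the concluding semicontinuity step via Lemma \ref{lemma_sc} go through exactly as in the proof of Proposition \ref{Prop3bis}.
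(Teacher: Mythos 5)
Your overall strategy is the same as the paper's: specialize the general $s$-dimensional space $\Pi\supset T_pSV_{\textbf{\textit{d}}}^{\textbf{\textit{n}}}$ to one cut out by tangent hyperplanes whose affine equations are pure bilinear monomials $X_{i,j}X_{r,l}$, check that the resulting Jacobian degenerates only at the origin, and conclude by semicontinuity via Lemma \ref{lemma_sc}. However, your write-up stops exactly at the step that constitutes the proof: you declare the explicit construction of $\Pi_0$ and the verification of the dimension bound to be ``the main obstacle'' and leave them undone, and the intermediate formula you quote for $\dim\Sing(\Pi_0\cap SV_{\textbf{\textit{d}}}^{\textbf{\textit{n}}})$, namely $s+n_r\sum_{i<r}n_i-M'$, is asserted rather than derived (and cannot be literally correct for $s$ strictly below the threshold, since the contact locus always contains $p$, hence has dimension at least $0$). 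As it stands this is a plan, not a proof.

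The gap is real but the missing content is short, and the obstacle you identify is in fact a non-issue. The paper simply takes the $n_r\sum_{i=1}^{r-1}n_i$ hyperplane sections $F_{i,j,l}=X_{1,0}^{d_1}\cdots X_{i,j}X_{i,0}^{d_i-1}\cdots X_{r,l}$ for $1\leq i\leq r-1$, $1\leq j\leq n_i$, $1\leq l\leq n_r$; each corresponds to an index at distance $2$ from $I_0$, hence is a hyperplane containing $T_pSV_{\textbf{\textit{d}}}^{\textbf{\textit{n}}}$, and in the chart $X_{1,0}=\dots=X_{r,0}=1$ its equation is exactly the bilinear monomial $X_{i,j}X_{r,l}$. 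The differentials $X_{r,l}\,dX_{i,j}+X_{i,j}\,dX_{r,l}$ vanish simultaneously only at the origin, so the contact locus of their common intersection near $p$ is $\{p\}$. There is no need to ``bring in higher-order tangent monomials to make up the remaining codimension'': the hypothesis $s\leq\prod_{i=2}^r\binom{n_i+d_i}{n_i}-n_r\sum_{i<r}n_i$ guarantees that a general $\Pi$ of dimension $s$ is an intersection of at least $n_r\sum_{i<r}n_i$ tangent hyperplanes, so one specializes that many of them to the $F_{i,j,l}$ and takes the remaining ones arbitrary --- cutting further can only shrink the locus where the linear space is tangent to $X$. With this explicit configuration in hand, your concluding appeal to Lemma \ref{lemma_sc} goes through as you describe. (Your first paragraph, exhibiting a linear subvariety inside the singular locus of a general $\Pi$, is correct but not needed for the one-directional statement of Proposition \ref{Prop4bis}.)
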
 
\begin{proof}
Without loss of generality we can assume as usual that $p=e_{J_{0}} \in SV_{\textbf{\textit{d}}}^{\textbf{\textit{n}}}$ where $J_{0}=(\{0,\dots,0\},\dots,\{0,\dots,0\})$. A basis for the linear system of the hyperplanes containing $T_{p}SV_{\textbf{\textit{d}}}^{\textbf{\textit{n}}}$ is given by $$\{X_{1,{J_1}}\dots X_{r-1,J_{r-1}}X_{r,j}=0\}_{{J=\{J_1,\ldots,J_{r-1},\{j\}\}\in\Lambda\:|\:d(J,I_0)\geq 2}}$$
Now let us consider hyperplane sections of the form $$F_{i,j,l}=X_{1,0}^{d_{1}}\dots X_{i,j}X_{i,0}^{d_{i}-1}\dots X_{r,l}=0$$ for $1\leq i \leq r-1$,$1 \leq j \leq n_{i}$ and $1 \leq l \leq n_{r}$.

In the affine chart $\mathbb{C}^{\sum_{i=1}^{r}n_{i}}$ defined by $X_{1,0}=\dots=X_{r,0}=1$ the partial derivatives of $F_{i,j,l}$ are given by
$$\frac{\partial(X_{1,0}^{d_{1}}\dots X_{i,j}X_{i,0}^{d_{i}-1}\dots X_{r,l})}{\partial X_{i,j}}=X_{r,l},\:\frac{\partial{(X_{1,0}^{d_{1}}\dots X_{i,j}X_{i,0}^{d_{i}-1}\dots X_{r,l})}}{\partial X_{r,l}}=X_{i,j}$$ 
Then the Jacobian matrix of the $F_{i,j,l}$ has rank zero if and only if all the coordinates $X_{i,j}$ with $1 \leq j \leq n_{i}$ vanish.
In particular, the intersection of the special hyperplane sections 
$$X_{1,0}^{d_{1}}\dots X_{i,j}X_{i,0}^{d_{i}-1}\dots X_{r,l}=0$$
has a singularity spanning the whole of $\mathbb{C}^{\sum_{i=1}^{r}n_{i}}$ only at $(0,\dots,0)$. Now, to conclude it is enough to note that the number of these special hyperplane sections is $n_{r}\sum_{i=1}^{r-1}n_{i}$ and to apply Lemma \ref{lemma_sc}.
\end{proof}

Finally, we have the following classification of $1$-weakly defective Segre-Veronese varieties.

\begin{thm}\label{th1wd}
The Segre-Veronese $SV_{\textbf{\textit{d}}}^{\textbf{\textit{n}}}$ is $1$-weakly defective if and only if $d_r=1$ and $n_r>\sum_{i=1}^{r-1}n_i$.
\end{thm}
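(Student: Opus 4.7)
The plan is to establish the two implications separately. The ``if'' direction follows immediately from the second statement of Proposition \ref{Prop1bis}, since the hypothesis $d_r=1$ and $n_r>\sum_{i=1}^{r-1}n_i$ is precisely what is assumed there.

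For the ``only if'' direction I would argue by contrapositive: supposing that either $n_r\leq\sum_{i=1}^{r-1}n_i$ or $d_r\geq 2$, I would show that $SV_{\textbf{\textit{d}}}^{\textbf{\textit{n}}}$ is not $1$-weakly defective. The first possibility is handled directly by Proposition \ref{Prop0}. In the remaining case, $n_r>\sum_{i=1}^{r-1}n_i$ and $d_r\geq 2$, I would apply the first part of Proposition \ref{Prop1bis} to $n_1+1$ general points $p_0,\ldots,p_{n_1}\in SV_{\textbf{\textit{d}}}^{\textbf{\textit{n}}}$, obtaining a hyperplane $H^{*}$ containing $\langle T_{p_0},\ldots,T_{p_{n_1}}\rangle$ whose intersection with $SV_{\textbf{\textit{d}}}^{\textbf{\textit{n}}}$ has zero-dimensional singular locus in a neighborhood of $p_0$. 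Since $H^{*}$ contains $T_{p_0}$, it lies in the irreducible family $\mathcal{F}$ of hyperplanes containing $T_{p_0}$ that parametrizes the $1$-weak defectiveness question at $p_0$.

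By the first part of Lemma \ref{lemma_sc}, applied in a fixed small neighborhood of $p_0$, the dimension of the local singular locus at $p_0$ of $H\cap SV_{\textbf{\textit{d}}}^{\textbf{\textit{n}}}$ is an upper semicontinuous function of $H\in\mathcal{F}$, so the existence of a single good member $H^{*}\in\mathcal{F}$ implies that a general $H\in\mathcal{F}$ also has zero-dimensional singular locus at $p_0$, which by definition gives non $1$-weak defectiveness. The main obstacle is precisely this final semicontinuity step: the invariant that must be controlled is the dimension of the components of $\Sing(H\cap SV_{\textbf{\textit{d}}}^{\textbf{\textit{n}}})$ passing through $p_0$ rather than the total dimension of the singular locus, and this localization must be performed exactly as in the proof of Lemma \ref{lemma_sc}, by restricting the incidence variety to an analytic neighborhood of $p_0$ before invoking upper semicontinuity of fiber dimensions.
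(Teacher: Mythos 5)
Your proposal is correct and follows essentially the same route as the paper, whose proof simply observes that the theorem is an immediate consequence of Propositions \ref{Prop0} and \ref{Prop1bis} via exactly the case split you describe. The only added content is your careful justification, via Lemma \ref{lemma_sc} and localization at $p_0$, of the passage from the first part of Proposition \ref{Prop1bis} (a good hyperplane through $\langle T_{p_0},\dots,T_{p_{n_1}}\rangle$) to non $1$-weak defectiveness; this step is sound and the paper treats it as implicit.
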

\begin{proof}
It is an immediate consequence of Propositions \ref{Prop0}, \ref{Prop1bis}.
\end{proof}

\section{On tangential weak defectiveness of products}\label{wdfib}
In this section we study tangential weak defectiveness for varieties that can be written as a product of a smaller dimensional variety and the projective line.

\begin{Lemma}\label{comp}
Let $W\subseteq\mathbb{P}^m$ be a non-degenerated irreducible projective variety, and consider the Segre embedding of $X = W\times \mathbb{P}^r\subseteq \mathbb{P}^m\times\mathbb{P}^r\rightarrow\mathbb{P}^N$ with $N = rm+r+m$. Fix a point $p\in\mathbb{P}^r$ and a hyperplane $H\subset\mathbb{P}^r$ not passing through $p$. Let $Z = W\times\{p\}$, $Y = W\times H$, and denote by $H_Z = \left\langle Z\right\rangle$, $H_Y = \left\langle Y\right\rangle$ their linear spans. Then $H_Z$ and $H_Y$ are complementary subspaces of $\mathbb{P}^N$, and $X\cap H_Z = Z$, $X\cap H_Y = Y$. 
\end{Lemma}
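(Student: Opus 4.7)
The plan is to choose coordinates on $\mathbb{P}^r$ adapted to the pair $(p,H)$: set $p=(1:0:\cdots:0)$ and $H=\{y_0=0\}$. Writing the Segre coordinates on $\mathbb{P}^N$ as $z_{ij}=x_iy_j$ for $0\leq i\leq m$, $0\leq j\leq r$, the subvariety $Z=W\times\{p\}$ lies in the coordinate subspace
$$\Lambda_Z:=\{z_{ij}=0 \mid j\geq 1\}\cong\mathbb{P}^m,$$
and $Y=W\times H$ lies in the coordinate hyperplane
$$\Lambda_Y:=\{z_{i0}=0 \mid 0\leq i\leq m\}\cong\mathbb{P}^{(m+1)r-1}.$$
By inspection $\Lambda_Z\cap\Lambda_Y=\emptyset$ and $\dim\Lambda_Z+\dim\Lambda_Y=m+(m+1)r-1=N-1$, so $\Lambda_Z$ and $\Lambda_Y$ are already complementary linear subspaces of $\mathbb{P}^N$. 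It then suffices to identify $H_Z$ with $\Lambda_Z$ and $H_Y$ with $\Lambda_Y$.

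To identify $H_Z=\Lambda_Z$, I would use the non-degeneracy of $W$: the isomorphism $\Lambda_Z\cong\mathbb{P}^m$ given by $z_{i0}\leftrightarrow x_i$ sends $Z$ bijectively onto $W$, which by hypothesis spans $\mathbb{P}^m$, so $\langle Z\rangle=\Lambda_Z$. For $H_Y=\Lambda_Y$ I would pick $r$ linearly independent points $y^{(1)},\dots,y^{(r)}\in H$ (a basis of $H\cong\mathbb{P}^{r-1}$); after changing coordinates on $\mathbb{P}^r$ so that $y^{(k)}$ becomes the $k$-th standard basis vector, each $W\times\{y^{(k)}\}\subseteq Y$ spans the coordinate subspace $\{z_{ij}=0:\,j\neq k\}\cong\mathbb{P}^m$ by the same non-degeneracy argument applied to $W$, and these $r$ pairwise-disjoint coordinate copies of $\mathbb{P}^m$ together span $\Lambda_Y$.

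Finally, for the intersections: a point $(x,y)\in W\times\mathbb{P}^r$ has Segre image in $\Lambda_Z$ iff $x_iy_j=0$ for every $i$ and every $j\geq 1$; since some $x_i$ is nonzero this forces $y_j=0$ for all $j\geq 1$, i.e.\ $y=p$, so $(x,y)\in Z$. The reverse inclusion $Z\subseteq X\cap H_Z$ is immediate, and the same argument with the roles of the indices $j=0$ and $j\geq 1$ swapped gives $X\cap H_Y=Y$. The only step with any substance is the identification $H_Y=\Lambda_Y$ (essentially the statement that the Segre embedding remains non-degenerate when one non-degenerate factor is restricted); everything else is a direct coordinate verification, so I do not expect a serious obstacle.
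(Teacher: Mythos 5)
Your proposal is correct, and for the complementarity claim it is essentially identical to the paper's proof: both choose coordinates with $p=[1:0:\dots:0]$ and $H=\{y_0=0\}$, identify $H_Z$ and $H_Y$ with the coordinate subspaces $\{z_{ij}=0 \mid j\geq 1\}$ and $\{z_{i0}=0\}$ using non-degeneracy of $W$, and conclude. The only genuine divergence is in the verification of $X\cap H_Z=Z$ and $X\cap H_Y=Y$: you do a direct coordinate computation (some $x_i\neq 0$ forces $y_j=0$ for $j\geq 1$, hence $y=p$, and symmetrically $y_0=0$ for the other case), whereas the paper argues geometrically with the fibers of $\pi:X\to W$ — a point $q\in (X\cap H_Z)\setminus Z$ would force the linear fiber $\mathbb{P}^r_w$ through $q$ to meet $H_Z$ in two points, contradicting the fact that $H_Z$ is the Segre image of $\mathbb{P}^m\times\{p\}$; similarly $H_Y$ would have to contain a whole fiber $\mathbb{P}^r_w=\langle q,H_w\rangle$, contradicting $H_Y\cap H_Z=\emptyset$. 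Your coordinate check is the more elementary and arguably more airtight route; the paper's fiber argument has the advantage of being coordinate-free and of foreshadowing the fiber-based reasoning reused in Theorem \ref{gen-twd}. You are also slightly more careful than the paper in justifying $H_Y=\Lambda_Y$ (via $r$ disjoint spanning copies of $\mathbb{P}^m$ over independent points of $H$), where the paper simply asserts $H_Y=\langle\mathbb{P}^m\times H\rangle$ and reads off the equations.
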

\begin{proof}
Since $W\subseteq\mathbb{P}^m$ is non-degenerated we have that $H_Z = \left\langle\mathbb{P}^m\times \{p\}\right\rangle$ and $H_Y = \left\langle\mathbb{P}^m\times H\right\rangle$. Consider homogeneous coordinates $[x_0:\dots:x_r]$ on $\mathbb{P}^r$ and $[y_0:\dots:y_m]$ on $\mathbb{P}^m$. Without loss of generality we may assume that $p = [1:0:\dots :0]$ and $H = \{x_0=0\}$. Hence, $H_Z = \{z_{0,1}=\dots = z_{m,r}=0\}$ and $H_Y = \{z_{0,0}=\dots = z_{m,0}=0\}$, where $z_{i,j}$ is the homogeneous coordinate on $\mathbb{P}^N$ corresponding to $y_ix_j$. Hence $H_Z$ and $H_Y$ are complementary subspaces of $\mathbb{P}^N$. 

Now, assume that there is a point $q\in X\cap H_Z$ with $q\notin Z$. Since $X = W\times\mathbb{P}^r$ the point $q$ lies on a fiber $\mathbb{P}^{r}_{w}$ over a point $w\in W$. Such fiber intersects $Z$ in a points $z\in Z$ with $z\neq q$ and hence $\mathbb{P}^r_w$ intersects $H_Z$ in at least two distinct points. On the other hand, note that $H_Z = \left\langle\mathbb{P}^m\times \{p\}\right\rangle$ is the fiber $\mathbb{P}^m_p$ over $p$ of the projection $\mathbb{P}^m\times\mathbb{P}^r\rightarrow\mathbb{P}^r$. A contradiction. 

Similarly, assume that there is a point $q\in X\cap H_Y$ with $q\notin Y$. The point $q$ lies on a fiber $\mathbb{P}^r_{w}$ over a point $w\in W$. Hence $\mathbb{P}^r_{w}$ intersects $Y$ in a hyperplane $H_w$ of $\mathbb{P}^r_{w}$ not containing $q$, and $H_Y$ contains the fiber $\mathbb{P}^r_{w} = \left\langle q, H_w\right\rangle$. A contradiction. 
\end{proof}

\begin{Proposition}\label{tang}
Let $W\subseteq\mathbb{P}^m$ be a non-degenerated irreducible projective variety, and consider the Segre embedding of $X = W\times\mathbb{P}^r\subseteq \mathbb{P}^m\times\mathbb{P}^r\rightarrow\mathbb{P}^N$ with $N = rm+r+m$.

If $p,q\in X$ are two distinct points lying on the same fiber of $\pi:X\rightarrow W$ over a smooth point $w\in W$ then the span of the tangent spaces $\left\langle T_pX,T_qX\right\rangle$ is tangent to $X$ along the line $\left\langle p, q\right\rangle$. 
\end{Proposition}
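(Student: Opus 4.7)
The plan is to work in the tensor description of the Segre embedding and compute the affine tangent spaces directly. Pick a local holomorphic parametrization $\phi : \mathcal{U} \subset \mathbb{C}^{\dim W} \to \mathbb{C}^{m+1}$ of $W$ near $w$ with $\phi(0)=w$, so that the Segre map of $X = W\times \mathbb{P}^r$ reads locally as $(u,y) \mapsto \phi(u) \otimes y \in \mathbb{C}^{(m+1)(r+1)}$. Differentiating in the $u$ and $y$ directions at the lift of $w\otimes y$ gives
\[
\widehat{T_{w\otimes y} X} \;=\; \widehat{T_w W}\otimes y \;+\; w \otimes \mathbb{C}^{r+1},
\]
where $\widehat{T_w W}\subset \mathbb{C}^{m+1}$ is the affine tangent space at $w$, i.e.\ the linear span of $w$ and of the partials $\phi_i(0)$.

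Next I write $p = w\otimes p'$ and $q = w\otimes q'$ with $p',q' \in \mathbb{C}^{r+1}$ linearly independent (since $p\neq q$). The fiber $\pi^{-1}(w) = w\otimes \mathbb{P}^r$ is a linear $\mathbb{P}^r$ sitting inside $X$, so the line $\Span{p,q}$ is contained in $X$ and its points are exactly $x_{\alpha,\beta} = w\otimes (\alpha p' + \beta q')$. Plugging $y = \alpha p' + \beta q'$ into the formula above yields
\[
\widehat{T_{x_{\alpha,\beta}} X} \;=\; \widehat{T_w W}\otimes(\alpha p' + \beta q') \;+\; w\otimes \mathbb{C}^{r+1}.
\]

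The decisive step is then purely linear: by bilinearity of the tensor product,
\[
\widehat{T_w W}\otimes(\alpha p'+\beta q') \;=\; \alpha\bigl(\widehat{T_w W}\otimes p'\bigr) + \beta\bigl(\widehat{T_w W}\otimes q'\bigr) \;\subseteq\; \widehat{T_p X}+\widehat{T_q X},
\]
while the summand $w\otimes \mathbb{C}^{r+1}$ is already contained in $\widehat{T_p X}$ (and in $\widehat{T_q X}$) on its own. Therefore $\widehat{T_{x_{\alpha,\beta}} X} \subseteq \widehat{T_p X}+\widehat{T_q X}$, equivalently $T_{x_{\alpha,\beta}} X \subseteq \Span{T_p X, T_q X}$, for every point $x_{\alpha,\beta}$ of $\Span{p,q}$, which is exactly the claim.

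There is no real obstacle once one adopts the tensor formalism: the ``$\mathbb{P}^r$-direction'' component $w\otimes \mathbb{C}^{r+1}$ of the tangent space does not depend on $y$, so the only piece of $\widehat{T_{w\otimes y} X}$ that genuinely varies with $y$ is $\widehat{T_w W}\otimes y$, and the affine dependence of this piece on the line parameter $y = \alpha p' + \beta q'$ makes the containment immediate. The smoothness hypothesis on $w$ is used precisely to guarantee that $\widehat{T_w W}$ has the expected dimension and is spanned by the partial derivatives of $\phi$ at the origin.
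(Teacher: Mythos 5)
Your proof is correct and follows essentially the same route as the paper's: a local parametrization of $X$ near the fiber over $w$, followed by the observation that the only part of the tangent space that varies along the line $\left\langle p,q\right\rangle$ depends affinely (bilinearly) on the line parameter. The paper phrases this as the explicit matrix identity $A(t)=tA(1)-(t-1)A(0)$ for the matrix whose rows span $T_{\gamma(t)}X$, which is exactly your decomposition $\widehat{T_{w\otimes y}X}=\widehat{T_wW}\otimes y+w\otimes\mathbb{C}^{r+1}$ written in coordinates.
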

\begin{proof}
Let $w\in W$ be a smooth point. We can parametrize $W$ in a neighborhood of $W$ as
$$
\begin{array}{ccccl}
\varphi&:&\mathbb{
C}^d&\longrightarrow&\mathbb{C}^m\\
&&(x_1,\dots,x_d)&\longmapsto&(\phi_1(x_1,\dots,x_d),\dots,\phi_m(x_1,\dots,x_d))
\end{array}
$$ 
where $d = \dim(W)$ and $\phi(0) = w$. Hence, a parametrization of $X$ is given by
$$
\begin{array}{ccccl}
\psi&:&\mathbb{
C}^d\times\mathbb{C}^r&\longrightarrow&\mathbb{C}^N\\
&&((x_1,\dots,x_d),(1,y_1,\dots,y_r))&\longmapsto&(\phi_1,\dots,\phi_m,\phi_1y_1,\dots,\phi_my_r)
\end{array}
$$  
Let us set $a_{i,j} = \frac{\partial\phi_i}{\partial x_j}(0)$ and $b_k = \phi_k(0)$. Without loss of generality we may assume that $p = \psi((0,\dots,0),(1,0,\dots,0))$ and $p = \psi((0,\dots,0),(1,\dots,1))$ so that the line $\left\langle p,q\right\rangle$ is parametrized by $\gamma(t)=\psi((0,\dots,0),(1,t,\dots,t))$. Now, the tangent space of $X$ at $\gamma(t)$ is spanned by the rows of the following matrix
$$
\begin{small}
A(t) = \left(\begin{array}{ccccccccccccccc}
a_{1,1}t & \dots & a_{1,1}t & a_{2,1}t & \dots & a_{2,1}t & \dots & \dots & a_{m,1}t & \dots & a_{m,1}t & a_{1,1} & \dots & a_{m,1}\\ 
\vdots & \ddots & \vdots & \vdots & \ddots & \vdots & \ddots & \ddots & \vdots & \ddots & \vdots & \vdots & \ddots & \vdots\\ 
a_{1,d}t & \dots & a_{1,d}t & a_{2,d}t & \dots & a_{2,d}t & \dots & \dots & a_{m,d}t & \dots & a_{m,d}t & a_{1,d} & \dots & a_{m,d}\\
b_1 & \dots & 0 & b_2 & \dots & 0 & \dots & \dots & b_m & \dots & 0 & 0 & \dots & 0\\ 
\vdots & \ddots & \vdots & \vdots & \ddots & \vdots & \ddots & \ddots & \vdots & \ddots & \vdots & \vdots & \ddots & \vdots\\ 
0 & \dots & b_1 & 0 & \dots & b_2 & \dots & \dots & 0 & \dots & b_m & 0 & \dots & 0
\end{array}\right)
\end{small}
$$
and to conclude it is enough to observe that $A(t) = tA(1)-(t-1)A(0)$.
\end{proof}

Now, we are ready to prove our main result on tangential weak defectiveness of products. 

\begin{thm}\label{gen-twd}
Let $W\subseteq\mathbb{P}^m$ be a non-degenerated irreducible projective variety, and consider the Segre embedding of $X = W\times \mathbb{P}^1\subseteq \mathbb{P}^m\times\mathbb{P}^1\rightarrow\mathbb{P}^N$ with $N = 2m+1$. Assume that $W$ has $s-$osculating regularity and $2-$strong osculating regularity. 

If the following conditions are satisfied:
\begin{itemize}
\item[-] for a general point $w \in W$ the intersection $T_w^dW \cap W=S$ is a zero dimensional scheme supported on $w$;
\item[-] for a general choice of two points $p,q \in \mathbb{P}^1$ and a general hyperplane $H$ in $\left \langle W \times \{q\} \right \rangle$ containing $T_w^d (W \times \{q\})$ we have that $$\left \langle H, T_{\pi(w)}^{\frac{d-1}{2}}(W \times \{p\}) \right \rangle \cap W \times \{p\}=S$$ and $S$ is supported on the projection of $w$;
\item[-] $h_s(d)\dim(X)+h_s(d)-1 < m$;
\end{itemize}
then $X$ is not $(h_s(d),m+h_s(d)-1)$-tangentially weakly defective, and hence $X$ is $h_s(d)$-identifiable. In particular, under this bound $X$ is not $h_s(d)$-defective. 

\end{thm}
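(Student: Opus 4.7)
The plan is to prove Theorem \ref{gen-twd} by adapting the strategy of Theorem \ref{Non_Weakly_defec} to the product setting $X=W\times\p^1$. The overall architecture is: take $h_s(d)$ general points of $X$, form the union of their tangent spaces, extend this to a general $(m+h_s(d)-1)$-dimensional linear subspace $\Pi$, and then degenerate $\Pi$ using the osculating regularity of $X$ (inherited from $W$ together with the trivial regularity of $\p^1$) into a subspace whose tangency locus can be computed directly using the two intersection hypotheses and the dimension bound.

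To inherit the osculating-regularity properties, observe that the local Segre parametrization $\psi(x,t)=(\phi(x),t\phi(x))$ of $X$ near a point $(w,q)$ satisfies $\partial_t^b\psi=0$ for $b\geq 2$, and hence the osculating space of $X$ splits through the decomposition $\p^N=\langle H_Y,H_Z\rangle$ of Lemma \ref{comp} as
$$T^k_{(w,q)}X=\bigl\langle T^k_w(W\times\{q\}),\;T^{k-1}_w(W\times\{p\})\bigr\rangle,$$
with the two summands lying in the complementary $m$-dimensional subspaces $H_Y$ and $H_Z$. Since $\p^1$ contributes only constant factors in each flat-limit argument, the $s$-osculating regularity and strong $2$-osculating regularity of $X$ follow by pairing the corresponding curves in $W$ (provided by the hypotheses) with constant maps into $\p^1$.

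Now fix general $p,q\in\p^1$ and a general point $p_1=(w,q)\in X$. Running the degeneration from Theorem \ref{Non_Weakly_defec} with $l=1$ and $k_1=d$ on the $H_Y$ factor concentrates the tangent spaces into $T^d_w(W\times\{q\})$, and a general hyperplane $H\subseteq H_Y$ containing it cuts $W\times\{q\}$ in a hypersurface whose singular locus near $w$ is controlled by the first hypothesis $T^d_w W\cap W=S$, exactly as in the proof of Proposition \ref{Prop1}; in particular the singular locus is zero-dimensional and supported on $w$. On the $H_Z$ factor, the decomposition above exhibits only $T^{d-1}_w(W\times\{p\})$, which strong $2$-osculating regularity of $W$ (Definition \ref{osc_reg}) further degenerates as the flat limit of $\langle T^{(d-1)/2}_{p'}, T^{(d-1)/2}_{p''}\rangle$ for two general points $p',p''\in W\times\{p\}$ specializing to $\pi(w)$. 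The second hypothesis then forces $\langle H, T^{(d-1)/2}_{\pi(w)}(W\times\{p\})\rangle \cap(W\times\{p\})$ to be supported on $\pi(w)$. Finally, the numerical condition $h_s(d)\dim(X)+h_s(d)-1<m$ leaves enough generic directions to pad $\Pi$ up to dimension $m+h_s(d)-1$ without introducing tangency along the remaining fibers $\{w'\}\times\p^1$ of the first projection, so that $\Sing(\Pi\cap X)$ remains zero-dimensional in a neighbourhood of $(w,q)$. Lemma \ref{lemma_sc} then transports this to a general $\Pi$ of the required dimension, and Theorem \ref{Non_Weakly_defec} lifts the single-point statement to the non $(h_s(d),m+h_s(d)-1)$-tangential weak defectiveness of $X$.

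The hard part will be step three: tying together the two intersection hypotheses on $W$ with the split of $T^d_{(w,q)}X$ and the strong $2$-osculating regularity, so that the $T^{d-1}$ appearing on the $H_Z$ side of the osculating decomposition is genuinely controlled by the hypothesis on $T^{(d-1)/2}$ via flat degeneration, and so that no pathological singular component arises from fibers $\{w'\}\times\p^1$ with $w'\neq w$. Once $\Sing(\Pi\cap X)$ has been pinned down to a zero-dimensional scheme near $(w,q)$, the identifiability and non-defectiveness conclusions are immediate from the implications recalled in Section \ref{sec1}.
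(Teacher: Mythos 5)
Your overall plan shares the paper's skeleton (split $\mathbb{P}^N=\langle H_Y,H_Z\rangle$ via Lemma \ref{comp}, degenerate onto a fiber, use osculating regularity inside that fiber, finish with Lemma \ref{lemma_sc}), and your observation that $T^k_{(w,q)}X=\langle T^k_w(W\times\{q\}),T^{k-1}_{\pi(w)}(W\times\{p\})\rangle$ is correct and implicit in the paper. However, there are two genuine gaps. The first is a dimension obstruction to your use of Theorem \ref{Non_Weakly_defec} as a black box on $X$: that theorem would require an $(m+h_s(d)-1)$-dimensional subspace containing the full osculating space $T^d_{(w,q)}X$, whose $H_Z$-component is all of $T^{d-1}_{\pi(w)}(W\times\{p\})$. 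In general $\dim T^{d-1}_{\pi(w)}W$ grows like $d^{\dim W}$ while $h_s(d)\sim s^{\lfloor\log_2 d\rfloor}$, so $T^d_{(w,q)}X$ simply does not fit inside a subspace of dimension $m+h_s(d)-1$ and the required non-tangent subspace need not exist. The paper avoids this: the subspace it analyses is $\langle H_0,z_1,\dots,z_h\rangle$ with $H_0\subset H_Y$ a hyperplane and only the $h$ \emph{points} $z_i=\pi(x_i)$ on the $Z$-side (dimension exactly $m+h-1$); the osculating space $T^{(d-1)/2}_{\pi(y_0)}Z$ appears only as an auxiliary space into which the $(h-1)$-plane $\langle z_1,\dots,z_h\rangle$ degenerates, so that the second hypothesis controls $\langle H_0,z_1,\dots,z_h\rangle\cap Z$ from above by semicontinuity.

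The second gap is the step you yourself flag as ``the hard part'': you assert, but do not prove, that padding up to dimension $m+h_s(d)-1$ introduces no tangency along other fibers $\{w'\}\times\mathbb{P}^1$. The numerical hypothesis alone does not give this, and this step is the actual core of the theorem. The paper's argument is: if $\langle H_0,z_1,\dots,z_h\rangle$ is tangent to $X$ at a point $x$ other than the chosen ones, then $T_xX$ contains the line $\mathbb{P}^1_{\pi(x)}$, so the span contains the whole fiber and in particular the point $\mathbb{P}^1_{\pi(x)}\cap Z$, which by the finiteness of $\langle H_0,z_1,\dots,z_h\rangle\cap Z$ near the $z_i$ must be some $z_j$; then Proposition \ref{tang} (tangency at two points of a fiber forces tangency along the entire fiber) yields tangency at $z_j$ itself, contradicting that same finiteness. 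Your proposal never invokes Proposition \ref{tang} and offers no substitute, so a positive-dimensional contact locus sweeping through nearby fibers is not excluded. Without these two ingredients the argument does not close.
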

\begin{proof}
Take two distinct points $p,q\in\mathbb{P}^1$. Let $Z = W\times\{p\}$, $Y = W\times\{q\}$, $H_Z = \left\langle Z\right\rangle$, $H_Y = \left\langle Y\right\rangle$. Note that by Lemma \ref{comp} we have that $H_Z\cap Z_Y = \emptyset$, $\left\langle H_Z,H_Y\right\rangle = \mathbb{P}^N$, $X\cap H_Y = Y$, $X\cap H_Z = Z$.
Let $h:=h_s(d)$.
Fix $y_1,\dots,y_h\in Y$ general points, and let $z_1,\dots, z_h\in Z$ be their projections through the projection map $\pi:X\rightarrow Z$. Now, consider general points $x_1(t),\dots,x_h(t)\in X$ with $t\in\mathbb{C}^{*}$ such that $\lim_{t\mapsto 0} x_i(t) = y_i$, and let 
$$T_t = \left\langle T_{x_1(t)}X,\dots,T_{x_h(t)}X\right\rangle$$
Note that if $z_i(t) = \pi(x_i(t))$ then $\lim_{t\mapsto 0} z_i(t) = z_i$. Set $T_0 = \lim_{t\mapsto 0}T_t$. Since $\dim(T_0)\leq h\dim(X)+h-1$ and by hypothesis $h\dim(X)+h-1< m$ there exists a hyperplane $H_0\subset H_Y$ containing $T_0\cap H_Y$.

Let $\{H_t\}_{t\in\mathbb{C}^{*}}$ be a family of hyperplanes in $\mathbb{P}^m$ such that $H_t\supseteq T_t\cap\mathbb{P}^m$. Hence we have $T_t\subseteq\left\langle H_t,z_1(t),\dots,z_h(t)\right\rangle$, and since $H_0$ and $\left\langle z_1,\dots,z_h\right\rangle$ are disjoint and $z_1,\dots,z_h\in Z$ are general we have that 
$$\lim_{t\mapsto 0}\left\langle H_t,z_1(t),\dots,z_h(t)\right\rangle = \left\langle H_0,z_1,\dots,z_h\right\rangle$$ with $T_0\subseteq\left\langle H_0,z_1,\dots,z_h\right\rangle$.

Let $y_0$ be a general point of $Y$ and let $$\gamma_1,\dots,\gamma_h:C \rightarrow Y$$ be smooth curves such that $\gamma_j(t_0)=y_0$ and $\gamma_j(t_{\infty})=y_h$. By the hypotheses on osculating regularity we have that $\lim_{t \rightarrow 0}\left\langle T_{\gamma_1(t)}Y,\dots,T_{\gamma_h(t)} \right \rangle \subset T_{y_0}^dY$.
Furthermore the curves $\pi \circ \gamma_1, \dots, \pi \circ\gamma_h:C \rightarrow Z$ realizes the degeneration $$\lim_{t \rightarrow 0}\left\langle \pi(\gamma_1(t)),\dots,\pi(\gamma_h(t)) \right \rangle \subset T_{\pi(y_0)}^{\frac{d-1}{2}}Z$$

Thanks to Lemma \ref{lemma_sc} we have that $\left\langle H_0,z_1,\dots,z_h\right\rangle\cap H_Z = \left\langle z_1,\dots,z_h\right\rangle$ scheme theoretically in a neighbourhood of the $z_i$.

Assume that $\left\langle H_0,z_1,\dots,z_h\right\rangle$ is tangent to $X$ at a points $x\neq y_i$ for all $i = 1,\dots,h$. Then $\left\langle H_0,z_1,\dots,z_h\right\rangle$ contains all the fiber $\mathbb{P}^1_x = \pi^{-1}(x)$ and therefore the point $\mathbb{P}^1_{x}\cap Z$ which must then be one of the $z_i$, say $z_h$. Hence $x\in\mathbb{P}^1_{z_h}$. 

Now, Proposition \ref{tang} yields that $\left\langle H_0,z_1,\dots,z_h\right\rangle$ is tangent to $X$ along the line $\left\langle x,y_h\right\rangle = \mathbb{P}^1_{z_h}$, and in particular is tangent to $X$ at $z_h$, a contradiction. Therefore, $\left\langle H_0,z_1,\dots,z_h\right\rangle$ and hence $\left\langle H_t,z_1(t),\dots,z_h(t)\right\rangle$ and $T_t$ are tangent to $X$ just at the prescribed points $x_i(t)$ for $i = 1,\dots,h$.
\end{proof}

\begin{Remark}\label{sd-nn}
Note that the non secant defectiveness of $X$ is not needed anywhere in the proof of Theorem \ref{gen-twd}.
\end{Remark}

As an application to Segre-Veronese varieties we get the following result. 

\begin{Corollary}\label{CorSV}
Consider a Segre-Veronese variety $SV_{\textbf{\textit{d}}}^{\textbf{\textit{n}}}\subset\p^{N(\textbf{\textit{n}},\textbf{\textit{d}})}$ with $\textbf{\textit{n}}=(1,n_2,\dots,n_r)$ and $\textbf{\textit{d}}=(1,d_2,\dots,d_r)$.
Assume that $n_2 \leq n_3 \leq \dots \leq n_r$ and let $d:=\min\{d_i\}-1$. If $$h < h_{n_2+1}(d) \sim n_2^{\lfloor log_2(d) \rfloor}$$  
then $SV_{\textbf{\textit{d}}}^{\textbf{\textit{n}}}$ is not $h$-tangentially weakly defective, and hence $SV_{\textbf{\textit{d}}}^{\textbf{\textit{n}}}$ is $h$-identifiable. In particular, under this bound $SV_{\textbf{\textit{d}}}^{\textbf{\textit{n}}}$ is not $h$-defective. 
\end{Corollary}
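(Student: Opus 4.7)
The plan is to apply Theorem \ref{gen-twd} to $X=W\times\mathbb{P}^1$, where $W=SV_{(d_2,\dots,d_r)}^{(n_2,\dots,n_r)}\subset\mathbb{P}^m$ with $m+1=\prod_{i=2}^r\binom{n_i+d_i}{n_i}$. Since $d_1=1$, the Segre-Veronese $SV_{\textbf{\textit{d}}}^{\textbf{\textit{n}}}\subset\mathbb{P}^{N(\textbf{\textit{n}},\textbf{\textit{d}})}$ coincides, via the linearity of the embedding in the first factor, with the Segre image of $W\times\mathbb{P}^1$, so we are exactly in the setting of Theorem \ref{gen-twd}.

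Next I would verify the hypotheses of that theorem. The two osculating-regularity assumptions on $W$ are supplied by \cite[Propositions 5.1, 5.10]{AMR17}: $W$ has strong $2$-osculating regularity, and since the smallest factor of $W$ is $\mathbb{P}^{n_2}$, it has $(n_2+1)$-osculating regularity; hence we take $s=n_2+1$. Picking a general point $w\in W$ as a coordinate point $e_{I_0}$ and using \cite[Proposition 2.5]{AMR17} to identify $T_w^dW$ with $\langle e_J\mid d(I_0,J)\le d\rangle$, the condition $T_w^dW\cap W=\{w\}$ scheme-theoretically reduces, in the standard affine chart around $w$, to a partial-derivative computation in the spirit of the proofs of Propositions \ref{Prop1} and \ref{Prop2}. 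The essential input is that $d=\min\{d_i\}-1<d_i$ for every $i$, which prevents any variable from appearing with exponent $\geq d_i$ in the local equations and forces the singular locus of the relevant hyperplane section to be isolated at the origin. The second local condition, concerning $\langle H,T_{\pi(w)}^{(d-1)/2}(W\times\{p\})\rangle\cap(W\times\{p\})$ for general $p,q\in\mathbb{P}^1$ and a general hyperplane $H\supset T_w^d(W\times\{q\})$, is handled identically on the product chart, now using the monomial description of osculating spaces on both factors.

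The numerical inequality $h_{n_2+1}(d)\dim(X)+h_{n_2+1}(d)-1<m$ follows from the asymptotics $h_{n_2+1}(d)\sim n_2^{\lfloor\log_2 d\rfloor}$ against the product $m+1=\prod_{i=2}^r\binom{n_i+d_i}{n_i}$, which already dominates $(n_2+1)^{d}$ under our hypotheses. With all three conditions verified, Theorem \ref{gen-twd} yields non-$(h_{n_2+1}(d),\,m+h_{n_2+1}(d)-1)$-tangential weak defectiveness; the same argument applies with any $h<h_{n_2+1}(d)$ in place of $h_{n_2+1}(d)$, since the degeneration step of the theorem only requires $h$ tangent spaces to fit inside $T^d_{y_0}Y$, which by Definition \ref{h_m} is possible for every $h\le h_{n_2+1}(d)$. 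Hence $SV_{\textbf{\textit{d}}}^{\textbf{\textit{n}}}$ is not $h$-tangentially weakly defective in the stated range, so it is $h$-identifiable by \cite[Proposition 2.4]{CO12}, and non-$h$-defectiveness follows from the implications recalled in Section \ref{sec1}.

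The main obstacle is the bookkeeping in the two local intersection computations: selecting the right affine chart, writing out the monomials that span the relevant osculating span plus the extra hyperplane/osculating contribution coming from the $\mathbb{P}^1$ factor, and checking that the associated Jacobian matrix has full rank only at the distinguished point. Apart from this technical check, everything is conceptually supplied by the general mechanism of Theorem \ref{gen-twd} together with the osculating regularity of Segre-Veronese varieties established in \cite{AMR17}.
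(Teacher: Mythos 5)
Your proposal follows essentially the same route as the paper: realize $SV_{\textbf{\textit{d}}}^{\textbf{\textit{n}}}$ as the Segre embedding of $W\times\mathbb{P}^1$ with $W=SV_{(d_2,\dots,d_r)}^{(n_2,\dots,n_r)}$, feed the osculating regularity of $W$ from \cite{AMR17} into Theorem \ref{gen-twd}, and check the two contact conditions via the monomial description of osculating spaces. The paper's verification of the first condition is slightly more direct than your Jacobian computation --- it simply observes that $T_p^{d}V_{d_j}^{n_j}=\langle x_0F \:|\: \deg(F)=d_j-1\rangle$ with $d=d_j-1$, so the only point of the Veronese lying in that span is $p$ --- but the content is the same, and your coordinate check of the second condition matches the paper's computation with the coordinates $Z_{I_2,\dots,I_r,0},Z_{I_2,\dots,I_r,1}$.

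One slip to fix: in verifying the numerical hypothesis $h_{n_2+1}(d)\dim(X)+h_{n_2+1}(d)-1<m$ you assert that $m+1=\prod_{i=2}^r\binom{n_i+d_i}{n_i}$ ``already dominates $(n_2+1)^{d}$.'' That inequality is false in general --- the left side grows only polynomially in $d$ while $(n_2+1)^d$ is exponential --- and what you actually need to bound is $h_{n_2+1}(d)\sim(n_2+1)^{\lfloor\log_2 d\rfloor}$, which is roughly $d^{\log_2(n_2+1)}$ and hence polynomial of degree $\log_2(n_2+1)\leq n_2\leq\sum_{i\geq 2}n_i$. With that corrected exponent the comparison does go through asymptotically (the paper itself leaves this hypothesis unverified, so you are in fact being more careful than the source), though you should be aware that in borderline cases such as $r=2$, $n_2=1$ the inequality can fail and the theorem gives nothing.
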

\begin{proof}
Since $T^d_p (SV_{d}^{n}) \subset T^{d'}_p (SV_{d}^{n})$ for $d' \geq d$ and $T_p^{d_i}(V_{d_i}^{n_i}) \subset T_p^d (SV_{d}^{n})$ for every $i=1,\dots,r$ we can look only at the Veronese factor $V_{d_j}^{n_j}$ for which $d=d_j-1$.
In this case if $p=[x_0^d]$ for a suitable choice of coordinates $[x_0,\dots,x_{n_j}]$ in $\mathbb{P}^{n_j}$ we have that 
$$T_p^d V_{d_j}^{n_j}=\left \langle x_0F\:|\:\deg(F)=d-1 \right \rangle$$
and so $T_p^d V_{d_j}^{n_j} \cap V_{d_j}^{n_j}$ is supported on $p$.

We can assume that $p,q \in \p^1$ are given by $p=[0,1]$ and $q=[1,0]$.
For every $i=2, \dots,r$ let $I_i$ be the set of multi-indexes of size $|I_i|=d_i$ associated to the coordinates $[x_0^i,\dots,x_{n_i}^i]$ under the Veronese embedding given by $|\mathcal{O}_{\p^{n_i}}(d_i)|$. Finally let $[Z_{I_2,\dots,I_r,0},Z_{I_2,\dots,I_r,1}]_{(I_2,\dots,I_r)}$ be the coordinates of the Segre-Veronese embedding in $\p^{N(\textbf{\textit{n}},\textbf{\textit{d}})}$.
If $w=[\otimes_{j=2,\dots,r}(x_0^j)^{d_j}]$ with $J=(J_1,\dots,J_r)$ its corresponding index then 

\begin{equation*}
T_w^dY=
\left\{\begin{array}{l}
Z_{I_2,\dots,I_r,0}=0 \quad \forall \:(I_2,\dots,I_r)\\

Z_{I_2,\dots,I_r,1}=0 \quad \text{with} \quad d((I_2,\dots,I_r),J)>d
\end{array}\right.
\end{equation*}

\begin{equation*}
T_{\pi(w)}^{\frac{d-1}{2}}Z=
\left\{\begin{array}{l}
Z_{I_2,\dots,I_r,1}=0 \quad \forall \: (I_2,\dots,I_r)\\

Z_{I_2,\dots,I_r,0}=0 \quad \text{with} \quad d((I_2,\dots,I_r),J)>\frac{d-1}{2}
\end{array}\right.
\end{equation*}
Now a general hyperplane $H \supset T_w^dY$ in $H_Y$ has equation 
\begin{equation*}
H=
\left\{\begin{array}{l}
Z_{I_2,\dots,I_r,0}=0 \quad \forall\: (I_2,\dots,I_r)\\

\sum \alpha_{(I_2,\dots,I_r)}Z_{I_2,\dots,I_r,1}=0 \quad \textit{with} \quad d((I_2,\dots,I_r),J)>d
\end{array}\right.
\end{equation*}

Finally 

\begin{equation*}
\left\langle H, T_{\pi(w)}^{\frac{d-1}{2}}Z \right \rangle=
\left\{\begin{array}{l}
Z_{I_2,\dots,I_r,0}=0 \quad d((I_2,\dots,I_r),J)>\frac{d-1}{2}\\

\sum \alpha_{(I_2,\dots,I_r)}Z_{I_2,\dots,I_r,1}=0 \quad \text{with} \quad d((I_2,\dots,I_r),J)>d
\end{array}\right.
\end{equation*}

and since by construction we have that $$H_Z=\{Z_{I_2,\dots,I_r,1}=0 \quad \forall\: (I_2,\dots,I_r)\}$$ with $T_{\pi(w)}^{\frac{d-1}{2}}Z \cap Z=\pi(w)$ we conclude.
\end{proof}

\begin{Remark}\label{unb}
The previous corollary gives an asymptotic bound for the identifiability of $SV_{d}^{n}$ depending only on the values of $\textbf{n}=(1,n_2\dots,n_r)$ and $\textbf{d}=(1,d_2\dots,d_r)$. 

Note that in our case, i.e. for a Segre-Veronese in which there is a $\p^1$ factor embedded linearly, the bound on secant defectiveness given in \cite{AMR17} is trivial while the bound coming from Corollary \ref{CorSV} ensures that $SV_{d}^{n}$ is $h$-identifiable asymptotically for 
$$
h \sim n_2^{\lfloor log_2(d)\rfloor}$$
Finally, Theorem \ref{gen-twd} does not require a further numerical assumption involving the rank. Indeed, at the best of our knowledge, the principal result in order to prove identifiability is the one in \cite{CM19}, in which it is shown that the extra inequality $h \geq 2\dim(SV_{d}^{n})$ has to be fulfilled.
\end{Remark}

\bibliographystyle{amsalpha}
\bibliography{Biblio}
\end{document}